\newtheorem{thm}{Theorem}
\newtheorem{lemma}{Lemma}
\newtheorem{coro}{Corollary}
\let\paragraph\subsection
\title{Mandelbulb, Mandelbrot, Mandelring and Hopfbrot }
\author{Oliver Knill}
\date{May 28, 2023, updated Jun 21, 2023}
\address{Department of Mathematics \\ Harvard University \\ Cambridge, MA, 02138 }
\subjclass{}
\keywords{Mandelbrot, Mandelbulb, Hopfbrot, Mandelstuff}
\begin{document}
\maketitle

\begin{abstract}
A topological ring $R$, an escape set $B \subset R$ and a family of maps $z^d+c$ defines the
degree $d$ Mandelstuff as the set of parameters for which the closure of the orbit of $0$ does
not intersect $B$. If $B$ is the complement of a ball of radius $2$ in $\mathbb{C}$ or $\mathbb{H}$,
it is the complex or quaternionic Mandelbrot set; in a vector space with polar decomposition 
$x=|x| U(\phi)$ like $R=\mathbb{R}^m$, the map  $z^d+c$ is defined as the map
$z=|z| U(\phi) \to z^d=|z|^d U(d\phi)$. We review some Jacobi Mandelstuff of quadratic maps
on almost periodic Jacobi matrices which have the spectrum on Julia sets. 
In a Banach algebra $R$, one can define $A^d=|A|^d U^d$ with $A=|A| U$ written as
the product of a self-adjoint $|A|$ and unitary element $U$.
In $\mathbb{R}^4$, the Hopf parametrization leads to the Hopfbrot, 
which has White-Nylander Mandelbulbs in $R=\mathbb{R}^3$ 
as traces and the standard Mandelbrot sets in $\mathbb{C}$ as co-dimension 
$2$ traces.  It is an open problem of White whether Mandelbulbs 
in higher dimensions are connected. 
The document contains an appendix with a proof of the Douady-Hubbard theorem.
\end{abstract}

\section{Euclidean Mandelstuff}

\begin{figure}[!htpb]
\scalebox{0.1}{\includegraphics{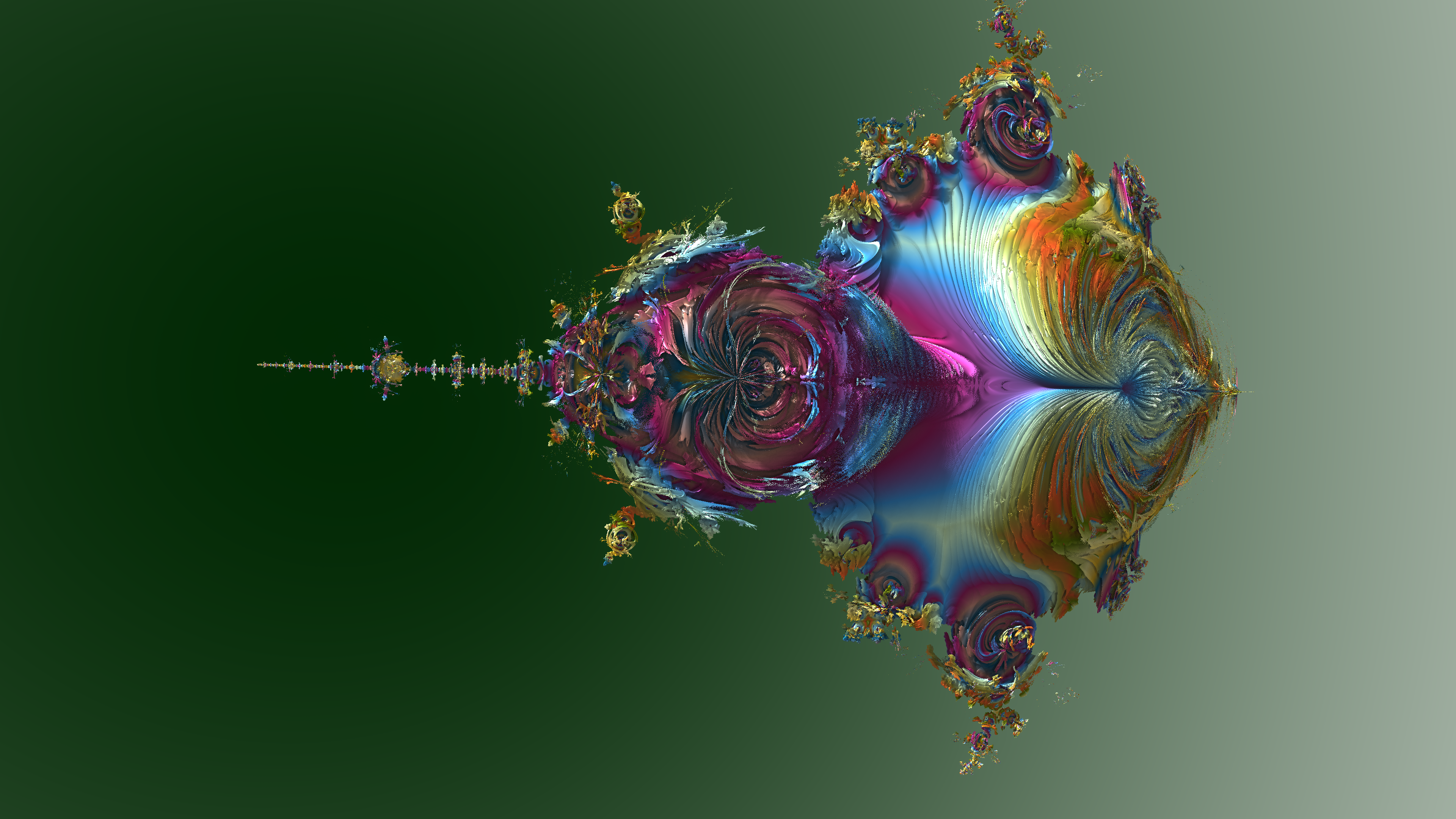}}
\caption{
The {\bf Mandelbug} is a degree 2 Mandelbulb in $\mathbb{R}^3$ generated with the 
``Mandelbulber" software. The set which has first been defined by Daniel White,
it has as the $z=0$ slice the usual Mandelbrot set in $\mathbb{C} \sim \mathbb{R}^2$. 
The object is therefore at least as complicated than the Mandelbrot set. 
While it is known that the Mandelbrot set is connected, it is not known 
whether the Mandelbug is connected.
}
\end{figure}

\paragraph{}
If $X=(r,\theta_1,\dots,\theta_{m-1})$ is a choice of {\bf spherical coordinates} in 
$R=\mathbb{R}^m$ using a {\bf sphere parametrization} $U(\theta_1, \dots, \theta_{m-1})$, 
and $X^d=(r^d, d \theta_1, \dots ,d \theta_{m-1})$, we have for every $c \in \mathbb{R}^m$ and $d \geq 2$, 
a map $T_c(X) = X^d+c$ of $R$ into itself. 
The points $c$ in $\mathbb{R}^d$ for which $T_c^n(0)$ stay bounded, 
is the {\bf Mandelbulb} of degree $d$ in that coordinate system. It is custom to use exponential notation 
$z^n$, even so it is not necessarily the power in the ring. 
The property $(z^d)^k=z^{dk}$ for example fails.
A specific case first used by Nylander-White is the parametrization
$\rho(\cos(\phi) \cos(\theta),\cos(\phi) \sin(\theta),\sin(\phi))$ which 
has the advantage that it produces on $\{ \phi=0\}$ the {\bf usual degree $d$ Mandelbrot set} in $\mathbb{C}$.
Motivated by its shape, one one could call it the {\bf White-Nylander Mandelbulb} 
the {\bf Mandelbug}. 

\paragraph{}
A different bulb in $\mathbb{R}^3$ is obtained with the spherical parametrization 
$$  \rho (\sin(\phi) \cos(\theta),\sin(\phi) \sin(\theta),\cos(\phi)) \; . $$
Any parametrisations of the sphere in $\mathbb{R}^n$ produces so a Mandelbulb in $\mathbb{R}^n$.
In four dimensions $m=4$, with the {\bf Hopf parametrization} of the $3$-sphere, 
we see the Hopf fibration \cite{Banchoff1990} and
get the {\bf Hopfbrot}, a name coined by Paul Nylander. By the way, a Povray 
implementation of {\bf Banchoff's visualization} of the 3-sphere had also been authored
by Nylander. 

\begin{figure}[!htpb]
\scalebox{0.80}{\includegraphics{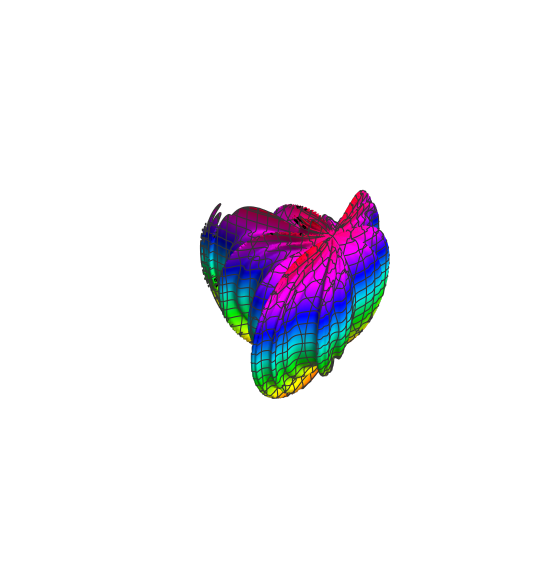}} 
\caption{
We see a picture showing a three dimensional slice of the four dimensional Hopfbrot. 
We have tweeted the 4 line Mathematica code for this picture on September 21, 2022. 
A variant is reproduced below. 
}
\end{figure}

\paragraph{}
The Hopf parametrization of the 3-sphere $x^2+y^2+z^2+w^2=1$ in $\mathbb{R}^4$ given by
$U(\theta_1,\theta_2,\phi)=(\cos(\phi) \cos(\theta_1),
                            \cos(\phi) \sin(\theta_1),
                            \sin(\phi) \cos(\theta_2),
                            \sin(\phi) \sin(\theta_2))$
with $\theta_1,\theta_2 \in [0,2\pi)$ and $\phi \in [0,\pi/2]$, 
we get for $\theta_1=0$ or $\theta_2=0$, two different White-Nylander Mandelbulbs. 
For $\phi=0$, it reduces on the co-dimension-2 slice $\{ z=0,w=0 \}$ to the usual 
Mandelbrot set. To display an approximation of the Hopfbrot, one can define the 
{\bf Green type functions} $G_n(c) = |T_c^n(0)|^2$ and looks at the regions 
$\{ G_n(c) \leq 4\}$. The following Mathematica code also can serve
as pseudo code, exactly telling how the set is computed. 

\lstset{language=Mathematica} \lstset{frameround=fttt}
\begin{lstlisting}[frame=single]
(* Hopfbrot solid   |T_c^5(0)|^2=4 cut at the plane   z=0*)
G[X_,m_,n_]:=Module[{Y=X}, T:=Block[{r=Sqrt[Y.Y],
  u=Arg[Y[[1]]+I*Y[[2]]],v=Arg[Y[[3]]+I*Y[[4]]],
  t=Arg[Y[[1]]/Cos[u]+I*Y[[3]]/Cos[v]]},
  Y=r^m{Cos[m*u]*Cos[m*t],Sin[m*u]*Cos[m*t],
        Cos[m*v]*Sin[m*t],Sin[m*v]*Sin[m*t]}+X]; 
  Do[T,{n}];Y.Y];
R=1.5; H[z_]:=RegionPlot3D[G[{x,y,z,w},2,5]<=4,
  {x,-R-1.5,R+0.1},{y,-R,R+.1},{w,-R,R+0.1},Boxed->False,
  Axes->False,PlotPoints->50, ColorFunction->Hue];H[0]
\end{lstlisting}

\paragraph{}
We can observe that the degree $d$ Hopfbrot in $4$-dimensions contains two 
co-dimension-1 Mandelbulbs of degree $d$, where one of them is the Mandelbug.
There is then also a co-dimension-2 Mandelbrot set.
Due to the fact that the $3$-sphere $S^3$ has a group structure $SU(2)$, 
there can be hope that it has nicer properties due to the enhanced 
algebraic structure. Nobody seems have looked however at the potential theoretical
aspect as carried out by Douady-Hubbard (an account of which has been added below 
as an appendix).

\paragraph{}
Since the standard $3$-sphere in $\mathbb{R}^4 \sim \mathbb{C}^2$ can be described 
as $|w|^2+|z|^2=1$, other {\bf power functions} like 
$(w,z)^2 \to (w^2,z^2)$ suggest themselves in $\mathbb{C}^2 \sim \mathbb{R}^4$.
That would not be an interesting choice however here because its Mandelstuff would 
just be the Cartesian product $M \times M$ in $\mathbb{H} \sim \mathbb{C}^2$. 
The connectivity question would then be obvious because in 
general, the product of two connected topological spaces is connected. 
In the usual quaternion case $R=\mathbb{H}$, the power is defined using quaternion 
multiplication. 

\section{Mandelstuff}

\paragraph{}
If $R$ is a topological ring and $B$ a closed {\bf escape set} in $R$, 
we can define the family of {\bf polynomial maps} $T_c(z)=z^d+c$. Each has an 
{\bf orbit} $O_c(z)=\{ T_c^k(z), k \geq 0 \}$, the {\bf forward orbit} of $z$ under
the dynamics. This defines the degree-$d$ {\bf ring Mandelstuff} 
$M=\{ c, \overline{O_c(0)} \cap B = \emptyset \}$. 
In a finite set $R$, we do not need a topology. In general, we might have the situation 
that the escape set is only approached in the limit. This is the case for 
$\mathbb{C}$ for example where $B=\{\infty\}$ is the {\bf point at infinity} of the 
{\bf Riemann sphere}. In a {\bf normed division algebra}, one can take $B=\{ |z|>2 \}$ 
so that $M$ is the set of parameters for which $0$ stays bounded. 
If $R=\mathbb{C}$, this is the Mandelbrot set. If $R=\mathbb{H}$ are the quaternions,
we get {\bf quaternion Mandelbrot set}. The later objects are implemented 
in the core language of the raytracer {\bf Povray}
very effectively. The following 2 lines produce a movie zooming into the Mandelbrot set.

\begin{tiny}
\lstset{language=Mathematica} \lstset{frameround=fttt}
\lstset{} \begin{lstlisting}[frame=single]
#declare c=clock; 
camera{location<-0.55,0.55,(1-c)/(1/5+c)> 
  look_at <-0.55,0.55,-5> 
  right <0,16/9,0> 
  up <0,0,1>}
plane{z,0 pigment{
   mandel 200 color_map{[0 rgb 0]
                        [(1-c)/6 rgb <1,c,1-c>]
                        [(1-c)/(3+3*c) rgb <1,c,0>]
                        [1 rgb 0]}} 
  finish{ambient 1}}
\end{lstlisting}
\end{tiny}

\paragraph{}
The {\bf quaternionic Mandelbrot set} $M_{\mathbb{H}}$ in which $|z|$ is 
the usual norm for a {\bf quaternion} $z$ is a product of the usual Mandelbrot set 
with a $2$-sphere. 
Note that if use in the ring $\mathbb{H}$, the 
decomposition $A=|A| U(A)$, where $|A|$ is the norm and $U(A)$ a unit quaternion, then 
$A^d = |A|^d U(A)^d$ is not the same than multiplying each of the Hopf angles by $d$ 
looked at before. In other words, this {\bf ring Mandelstuff} has little in common
with the Euclidean Hopfbrot defined before.

\paragraph{}
Ring Mandelstuff can also be studied in discrete and especially in finite situations. 
The analysis has then a more number theoretical
nature. For a {\bf finite ring} $R$ with multiplicative unit $1$, we could look at 
$B=\{1\}$ and look at $M(d,R)=\{ c, $, the orbit of $z \to z^d + c$ starting at 
$0$ does not contain $b$ $\}$. This is already interesting in a finite ring 
like $\mathbb{Z}_n$ or more geometrically in 
the $m$-dimensional vector space like the {\bf Galois field} $Z_p^m = Z_{p^m}$.
Polynomial maps $z \to z^g+c$ play an important role in {\bf cryptology} 
because they behave on $Z_p$ like {\bf pseudo random number generators}. 
The point $c=0$ is always in the Mandelbrot set. 
\footnote{The choice $0$ for the initial object and $1$ for the terminal object
suggests itself also when looking at categories which are also rings and playing a word game.}

\paragraph{}
Polynomial maps appear in cryptology because they allow to exploit the {\bf Birthday 
paradox phenomenon} to find collisions in an orbit to get factors. 
The {\bf Pollard $\rho$ method} uses the quadratic map $z \to z^2 + c$  to factor integers. 
See for example \cite{Riesel}.
Because the map is essentially random on the ring, there are many collisions, 
leading to a periodic {\bf attractor}  
playing the role of {\bf Julia sets}. Given an escape set $B$ or 
escape point, we can then look at the set of parameters for which the attractor does
not intersect $B$ and call this the Mandelbrot set of that escape set. In a topological 
ring, a natural {\bf escape set} would be the {\bf terminal object} $1$ and take as
a {\bf starting point} the {\bf initial object}.

\paragraph{}
Let us look at a specific case of
a finite field $Z_{p^2} \sim Z_p^2$ which is a discrete $2$-
dimensional vector space over $Z_p=\mathbb{Z}/(p \mathbb{Z})$.
We can in particular look at the {\bf Frobenius case} $T_c(z) = z^p+c$
in which case $T_c$ is a bijection of the ring. 
This means that the orbits are all periodic without pre-periodic component. 
We take $B=\{1\}$ and start to iterate orbits at the point $z=0$. 
The parameter point $c=0$ in the field is 
always in the Mandelbrot set because $z=0$ is a fixed point. 
There are $p(p-1)$ cases for which the orbit of $0$ never reaches $1$. 

\begin{lemma} In the Frobenius case and $m=2$ and where the map is $T_c(x)=x^p+c$,
the Mandelbrot set in $Z_{p^2}$ with initial point $0$ and escape point $1$
has $p^2-p+1$ elements. It covers the majority of the ring. 
\end{lemma}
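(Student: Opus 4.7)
The plan is to exploit the observation that in $R=Z_{p^2}\sim Z_p^2$, Fermat's little theorem applied coordinatewise forces $x^p=x$ for every $x\in R$, so the Frobenius polynomial map collapses to an affine translation and the whole problem becomes a simple counting exercise on the additive group $(R,+)$.

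The first step is to rewrite $T_c(x)=x^p+c$ as $T_c(x)=x+c$ using $x^p=x$. Iterating gives $T_c^k(0)=kc$ with the integer $k$ reduced modulo $p$, so the forward orbit $O_c(0)$ coincides with the cyclic additive subgroup $\langle c\rangle \subset R$, which has order $p$ when $c\neq 0$ and order $1$ when $c=0$.

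The second step is to identify the bad parameters, those $c$ with $1\in\langle c\rangle$. For $c=0$ the orbit is $\{0\}$ and avoids $1$, so $c=0$ lies in the Mandelbrot set. For $c\neq 0$ one has $1\in\langle c\rangle$ if and only if $1=kc$ for some $k\in\{1,\dots,p-1\}$, equivalently $c=k^{-1}\cdot 1$; as $k$ ranges over $Z_p^*$ this singles out exactly the $p-1$ nonzero elements of the diagonal copy $Z_p\cdot 1\subset R$.

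Subtracting gives $|M|=|R|-(p-1)=p^2-p+1$, and the inequality $p^2-p+1>p^2/2$ (valid for every $p\geq 2$) confirms that $M$ covers the majority of the ring. The argument has essentially no technical obstacle; the only point worth flagging is that $Z_{p^2}$ must be read as the characteristic-$p$ product ring $Z_p^2$ (as the notation $Z_{p^2}\sim Z_p^2$ suggests), since it is this identification that makes $x^p=x$ hold and reduces the Frobenius dynamics to a pure translation, neatly matching the preceding remark that $p(p-1)$ nonzero parameters together with $c=0$ give the Mandelbrot set.
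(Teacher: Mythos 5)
Your argument is correct and rests on the same key idea as the paper's own proof: Fermat's little theorem turns $T_c$ into the translation $x \mapsto x+c$, after which one only has to count the parameters $c$ with $1 \in \langle c\rangle$, namely the $p-1$ nonzero scalar multiples of $1$, giving $|M|=p^2-(p-1)=p^2-p+1$. The difference lies in where Fermat is applied, and it matters. You work in the product ring $Z_p\times Z_p$, where $x^p=x$ holds identically, so the reduction to a translation is exact and the count is immediate and complete. The paper instead ``looks at the orbit modulo $p$''; taken literally in $\mathbb{Z}/p^2\mathbb{Z}$ or in the Galois field $GF(p^2)$ this only shows that the orbit meets the residue class of $1$, not the point $1$ itself, so the paper's sketch has a gap that your version closes. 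Your closing caveat about the meaning of $Z_{p^2}$ is therefore essential rather than cosmetic: if one reads $Z_{p^2}$ as the genuine finite field $GF(p^2)$ with the nontrivial Frobenius (as the words ``finite field'' and ``Frobenius'' suggest), then $x^p=x$ fails off the prime subfield, one computes $T_c^{2k}(0)=k\,\mathrm{Tr}(c)$ and $T_c^{2k+1}(0)=k\,\mathrm{Tr}(c)+c$, and the Mandelbrot set becomes the trace-zero line with only $p$ elements for odd $p$, contradicting the stated count. So the lemma holds exactly under the identification you single out, and your proof is the rigorous form of the argument the paper gestures at.
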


\begin{proof}
We look at the orbit modulo $p$. The orbit of $T_c(x)=x^p+c$ modulo $p$ is because
of {\bf Fermat's little theorem} $x^p=x \; {\rm mod} \; p$ the same than the 
orbit of the {\bf linear map} $x \to x+c$. There are $(p-1)$ conjugacy classes 
$c$ for which this reaches $1$: every $a \neq 0$ and every $c+ap$  
with $c=0,\dots,p-1$ we have a dynamics which does not reach $0$.
Then there is also $c=0$ which always in the Mandelbrot set. 
\end{proof}

\paragraph{}
Of course, instead of $b=1$, one could use any other point different from $0$. 
In the Euclidean case, one has a natural point $b=\infty$ 
(the point at infinity after a one-point compactification), 
which has the property that $T_c(b)=b$. This can not be done in a finite ring.

\paragraph{}
How does the Mandelbrot set look like in this Frobenius case? 
One of the simplest set-ups is to look at a large prime $p$ and $m=2$ which 
produces a two dimensional Mandelbrot set. So, let $R=Z_p^2$ and 
$T_c(x)=x^2+c$ and $M=\{ c \in R, \; 1 \notin O_c(0) \}$. 
A natural question is to ask for example whether
$M$ is connected in the nearest neighbor topology.

\paragraph{}
If $X=\{0,1\}^{\mathbb{N}}$ is the set of all $0-1$ sequences 
with only finitely many $1$ and $S$ is a finite state set, we can think of the elements in 
$R = X \times S$ as a pair $(x,y)$ where the first part entries of the
{\bf tape of a Turing machine} $c$ and $y$ is the state of the machine. Define $B$ as
the set $X \times \{h\}$ where $h$ is the {\bf halte state}.
Instead of taking polynomial maps indexed by $R$ we can look at the class $T_c$ of maps
which are given by Turing machines $c$. Applying $T_c(x)$ is doing a {\bf computation step},
changing the tape and moving to a new state.
The {\bf Mandelbrot set} of this computation set-up is the set of Turing machines $c$
which do not halt when starting with the empty tape. 
This discrete Mandelbrot set is not computable
as Turing has shown. When seen like this, there are finite but potentially infinite dynamical systems 
for which the Mandelbrot set is not computable. In this context, one should mention
the {\bf Blum-Shub-Smale model} of computation, which uses real numbers for 
computation \cite{BSS} and which deals with the standard Mandelbrot set. 
The BSS machines are more general than Turing machines. The standard 
Turing machines, described first 1939, are finite, the BBS machines 
can work with real numbers. 

\paragraph{}
If $R$ is a real or complex matrix algebra in which we have a {\bf polar decomposition} 
$Z=|Z| U$ with $|Z|={\rm tr}(Z^*Z)^{1/2}$ (defined by the functional calculus for
the self-adjoint $Z^* Z$) and where $U$ is orthogonal or unitary, one can look at 
can look at $Z^d = |Z|^d U^n$ and define $M(R)$ as the set of $c$ such that 
$T_c(Z) = Z^d+c$ has a bounded orbit at $0$. This can be applied already to a small
space like $R=M(2,R)$, where the Mandelbrot set now can be seen as a subset of 
$\mathbb{R}^4$. With the usual power $Z^d=Z \cdot Z \cdots Z \cdot Z$ using matrix 
products, the Mandelbrot contains a product of standard Mandelbrot sets, the reason
being that we can always assume that $c$ is in Jordan normal form and that the orbit
stays in the commutative Banach algebra generated by $c$. The nilpotent part does not 
matter so that for example all nilpotent $c$ are in the Mandelbrot set. 

\paragraph{}
Let us look at the dynamics with $Z^d = |Z|^d U^d$ if $Z=|Z| U$. 
For symmetric matrices in $M(m,\mathbb{R})$, we have
$T(A)=A^d+c$ which remains in the class of symmetric matrices. The matrix Mandelstuff set $M$
is just the set of self-adjoint matrices for which all eigenvalues are in $[-1,1]$. 

\paragraph{}
On a finite ring $R$, any escape set $B$ defines Mandelstuff 
$M = \{ c \in R, \bigcup_{n \geq 0} T_c^n(0) \cap B=\emptyset \}$. For a
locally finite ring, one can look at a situation where the complement of $B$
is finite and have so a finite set $M$. To simulate a continuum like the Mandelbrot
set, we could look at a large number $N$, and take $R=(\mathbb{Z} + i \mathbb{Z})/N$ 
and define the power operation $z^d = [z^d]_R$ where $[x+i y]_R$ has the coordinates
$[N x],[N,y]$ where $[q]$ is the floor function. Taking $B=\{ |z|>2 \}$ gives for 
every $N$ a finite discrete Mandelbrot set. 

\section{Jacobi Mandelstuff}

\paragraph{}
In \cite{Kni93diss}, we looked at Jacobi operators
$L= a \tau + (a \tau)^*$ on $L^2(X)$ with $\tau f=f(T)$ defined by a 
{\bf measure theoretical dynamical system} $(X,T,m)$, where $(X,m)$ is
a probability space and where $a$ is a function in in $L^{\infty}(X,m)$. 
The operators $L$ are in a {\bf von Neumann algebra}, the 
{\bf crossed product} of the commutative von Neumann algebra $L^{\infty}(X)$ 
with the action. It is a type $II_1$ factor.  One can see $L$
as a bounded self-adjoint operator on the Hilbert space $L^2(X,m)$. Instead of looking 
at the operator $L$ on $L^2(X)$, one can also study for almost all $x \in X$ the 
operator $L_x u_n = a(T^n x) u_{n+1} + a(T^{n-1} x) u_{n-1}$ on $l^2(\mathbb{Z})$. 
If $T$ is ergodic, then the spectrum and spectral type is almost everywhere the same.
This is a frame work of {\bf random operators} in the sense of random variables taking 
values in a non-commutative Banach space \cite{Carmona,Cycon}. 
\footnote{The name ``random" is understood similarly as in probability theory. A 
random variable is a real-valued measurable function on a probability space. A 
random operator is an operator-valued measurable function on a probability space.}

\paragraph{}
If $(Y,S,n) = \phi(X,T,m)$ is a $2:1$ {\bf integral extension} of the dynamical system (meaning
$Y$ is a $2:1$ cover of $X$ and $S^2=T$ \cite{CFS,Parry80,Friedman}),
then $\phi^n$ converges in the metric $d(T,S)=m(\{x, T(x) \neq S(x) \})$ space 
of measure-preserving transformations to a unique fixed point, which is the 
{\bf adding machine} on the {\bf dyadic group of integers}. This dynamical system
is also known as the {\bf von Neumann Kakutani system} \cite{Kni93b}. 
It is an example of an ergodic dynamical system that is completely understood: it is an 
ergodic group translation on a commutative topological group and so has 
explicit discrete Koopman spectrum. 

\paragraph{}
The dyadic group of integers $\mathbb{Z}_2$ is one of the most important 
compact topological groups. Its embedding in the {\bf field of dyadic numbers} 
can be seen as the analog of the embedding of the usual integers $\mathbb{Z}$ in 
$\mathbb{R}$. The {\bf Pontryagin duality} between $\mathbb{Z}$ and 
$\mathbb{T}=\mathbb{R}/\mathbb{Z}$ has as an analog the 
Pontryagin duality between the dyadic group of integers $\mathbb{Z_2}$ and the 
{\bf Pr\"ufer group} $\mathbb{P}_2$, which are the rational dyadic numbers 
$k/2^n$ modulo 1, meaning the rational dyadic numbers in the circle 
$\mathbb{T}= \mathbb{R}/\mathbb{Z}$. Like $\mathbb{R}$, the field of dyadic numbers
$\mathbb{Q}_p$ is self-dual by a theorem of Tate. 

\paragraph{}
While in our {\bf real world}, the quotient 
$\mathbb{T}= \mathbb{R}/\mathbb{Z}$ is compact and the integers are discrete,
in the {\bf dyadic world}, the integers $\mathbb{Z}_2$ are compact and the 
quotient $\mathbb{P}_2$ is discrete. 
The dyadic analog of the {\bf addition} addition $x \to x+\alpha$ on $\mathbb{T}$ 
is a translation$n \to n+a$ on the group of dyadic integers. The smallest addition 
is the  {\bf adding machine} on $\mathbb{Z}_2$ which now is a measure preserving dynamical system. 
There is a minimal translation unlike in the traditional world, where the compact space $\mathbb{T}$ 
has no smallest ergodic translations $T(x)=x+\alpha$ on $\mathbb{T}$. 
One knows everything about this almost periodic system $(X,T,m)$.
The Koopman spectrum of the unitary $\tau$ is the dual group of $X$, the 
{\bf Pr\"ufer group} $\mathbb{P}_2$. It is the unique dynamical system which has a 
square root that is isomorphic. It is nice to have a compact set of integers and so 
a compact topological group in which there is a smallest translation. 
Having a {\bf smallest translation}, space is quantized unlike in the real world, 
where we can produce arbitrary small translations $x \to x+\alpha$. 

\paragraph{}
The renormalization on the base space of dynamical systems can be lifted to the fiber
bundle of operators \cite{Kni95}. \footnote{The picture is that over every point $T$,
there is a fiber given as a Von Neumann Algebra defined by $T$.} If we start with a Jacobi operator
$L$, applying the map $L^2+c Id$ produces operators which are no more Jacobi operators in general.
We therefore apply the inverse operation, which has two branches. This works nicely and produces again 
Jacobi operators.  Given $c$ outside the Mandelbrot set, there are two operators
$D=m^{\pm} \sigma + (m^{\pm} \sigma)^*$ such that $D^2+c=L$. The new functions $m^{\pm}$ 
are in the real case the {\bf Titchmarsh-Weyl functions} related to the stable and unstable 
{\bf Oseledec spaces} of the hyperbolic $SL(2,\mathbb{C})$-transfer cocycle obtained 
from $L$. We write $\phi(L)=D$. We have now a renormalization map $(T,L) \to (S,D)$,
where $S^2=T$ and $L=D^2+c$ (more precisely two copies of $L$) on each ergodic 
component one. Now define the {\bf Jacobi Mandelbrot set} as the complement of 
complex energies $c$ for which the operators $\phi^n(L)$ converge to a bounded operator. 

\paragraph{}
On the spectral level, the spectrum as a set satisfies $\sigma(L) = \sigma(D)^2 + c$. 
This means that we necessarily need the Jacobi Mandelbrot set to include the standard 
Mandelbrot set. We know that for $|c|$ large enough, the map $\phi$ is a contraction 
in the space of bounded operators. 
Let $JM$ denote the Jacobi Mandelbrot set. So far we only know

\begin{thm} The set $JM$ satisfies $M \subset JM \subset \mathbb{C}$ and 
is compact.  \end{thm}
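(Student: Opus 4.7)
The plan is to verify the three claims $M \subset JM$, $JM \subset \mathbb{C}$, and compactness in turn. The middle one is immediate from the definition.

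For the first inclusion, I would iterate the spectral identity $\sigma(L) = \sigma(\phi(L))^2 + c$ to get $\sigma(\phi^n(L)) = T_c^{-n}(\sigma(L))$, and then argue by contradiction. Suppose $c \in M$ but $\phi^n(L)$ converges in operator norm to some bounded self-adjoint $L_\infty$. Passing to the limit in the relation $\phi^n(L)^2 + c = \phi^{n-1}(L)$ yields $L_\infty^2 - L_\infty + c = 0$, so by the functional calculus $\sigma(L_\infty) \subseteq \{(1 \pm \sqrt{1-4c})/2\}$, a set of cardinality at most two. On the other hand, standard complex dynamics (Brolin--Lyubich equidistribution of preimages) shows that the backward preimages $T_c^{-n}(\sigma(L))$ accumulate on the Julia set $J_c$, which is perfect and uncountable for $c \in M$. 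Combined with Hausdorff continuity of the spectrum under norm convergence of self-adjoint operators, this forces $J_c \subseteq \sigma(L_\infty)$, a contradiction.

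For compactness, I would treat boundedness and closedness separately. Boundedness uses the cited fact that $\phi$ is a contraction on the bounded-operator space once $|c|$ is sufficiently large, so Banach's fixed point theorem gives convergence of $\phi^n(L)$ and places $c$ outside $JM$; hence $JM$ lies in a disk. For closedness, I would prove that the complement $\{c : \phi_c^n(L) \text{ converges to a bounded operator}\}$ is open: given $c_0$ in this set with limit $L_\infty$, the implicit function theorem applied to the fixed-point equation $D^2 - D + c = 0$ inside the crossed-product von Neumann algebra yields a locally unique continuous branch $L_\infty(c)$, and a uniform contraction rate on a small neighborhood follows from the fact that the linearization of $\phi_c$ at the fixed point has norm bounded away from $1$, which reflects the hyperbolicity of the transfer cocycle outside $M$. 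A standard perturbation argument for contractive fixed-point iterations then transfers convergence from $c_0$ to nearby $c$.

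The main obstacle is the closedness half: translating the cocycle hyperbolicity into an operator-norm contraction of $\phi_c$ uniformly in a neighborhood requires working within the fibered von Neumann algebra framework of \cite{Kni95}, since the Titchmarsh--Weyl functions $m^\pm$ are naturally defined there and their dependence on $c$ is analytic off the spectrum rather than globally.
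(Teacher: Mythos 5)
Your first two claims track what the paper actually offers for this theorem: the paper states it without a formal proof, supporting it only by the preceding remarks that the spectral identity $\sigma(L)=\sigma(D)^2+c$ forces $M\subset JM$ and that $\phi$ is a contraction for $|c|$ large enough, which confines $JM$ to a disk. Your elaboration of $M\subset JM$ (any norm-limit $L_\infty$ satisfies $L_\infty^2-L_\infty+c=0$, hence has spectrum in a two-point set, while the spectra of $\phi^n(L)$ should accumulate on the uncountable Julia set) is a genuine addition, but note one subtlety: the spectral relation only gives $T_c^n\bigl(\sigma(\phi^n(L))\bigr)=\sigma(L)$, i.e.\ $\sigma(\phi^n(L))$ is \emph{some} compact subset of the full preimage $T_c^{-n}(\sigma(L))$ that surjects onto $\sigma(L)$. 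Since the renormalization has two branches $m^{\pm}$ and $\phi$ selects among them, you cannot directly invoke Brolin--Lyubich equidistribution of \emph{full} preimages; you would first have to argue that the selected branches still accumulate on $J_c$. (Surjectivity onto the uncountable $\sigma(L)$ together with upper semicontinuity of the spectrum under norm convergence may be enough to salvage the contradiction, but that is a different argument from the one you wrote, and for complex $c$ the operators are not self-adjoint, so only upper semicontinuity is available.)

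The genuine gap is in closedness. Your argument requires the linearization of $\phi_c$ at its fixed point to have norm bounded away from $1$ uniformly near any $c_0$ in the convergence set, justified by appeal to ``hyperbolicity of the transfer cocycle outside $M$.'' But this is precisely the unresolved difficulty the paper flags: contraction of the two inverse branches is only known for $|c|$ large, and the paper says explicitly that for general $c$ outside $M$ ``we do not know that in general'' --- the open problem $JM=M$ exists exactly because convergence of $\phi_c^n(L)$ has not been tied to hyperbolicity of the underlying scalar dynamics. Moreover, mere norm-convergence of $\phi_{c_0}^n(L)$ does not yield a contraction rate strictly less than $1$ at the limit (a non-hyperbolic fixed point can attract its own orbit), so the perturbation step that transfers convergence to nearby $c$ does not go through as written. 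The paper itself never addresses closedness of $JM$ at all, so the compactness claim, as far as either the paper or your proposal establishes it, reduces to boundedness plus an unproved closedness assertion; flagging that honestly would be better than the implicit-function-theorem sketch.
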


\paragraph{}
We believe that $JM=M$, but we did not show that yet. The difficulty we battled when working on this 
as a graduate student was to control the convergence of the two contractions obtained by 
inverting $T_c(L)=L^2+c$. When working in $\mathbb{C}$, we deal with an {\bf iterated function system}
in an operator space: there are two contractions and the Julia sets for $c$ outside the Mandelbrot set are
known to be {\bf Cantor dust} of zero measure. In the quantum version, where the iteration 
is made in fiber bundle over the space of dynamical systems, where each figure is 
$L^{\infty}(X)$ encoding the operator $L$, we also have an iterated function system for 
large $|c|$ and so convergence, we do not know that in general. We can not just take a
vector $\psi$ and look at $(\psi, \phi^n(L) \psi) \in \mathbb{C}$ because the renormalization
map is chaotic on the attractor $J_c$ in the Banach space, the analog of the Julia set
in the complex case.  But it is very likely that we can for every complex $c$ outside the 
Mandelbrot set just find a suitable metric on the von Neumann algebra so that the two branches
of the iterated function system are contractions and establish so that there is a Cantor set of 
random Jacobi operators which are invariant under the map $T_c(L)=L^2+c$.

\paragraph{}
We should add that in a non-ergodic set-up while working with bounded 
operators on $l^2(\mathbb{N}$, the almost periodic matrices related to this 
{\bf non-commutative Mandelbrot story} had been studied in 
\cite{BGH,BBM}. If one looks at these operators one gets half infinite potentials
which have the same hull than the sequences we were looking at. What was new in 
our approach is that we worked within the class of Jacobi operators defined over
dynamical systems and could lift the quadratic map to an attractor of operators 
which have the property that $L^2$ restricted to half of the space is again of the 
same form. In some sense, this is a non-commutative version of the quadratic map. 
Instead of working in the commutative algebra $\mathbb{C}$, we had worked directly 
in an infinite dimensional non-commutative von Neumann algebra. We therefore almost
for free got the nature of the nature of the hull of the almost periodic sequences
generated by \cite{BGH,BBM}. We once thought being able to prove that {\bf all}
operator $L_x$ in this model have purely singular continuous spectrum. One only has 
to exclude point spectrum. By Kotani theory and because the Lyapunov exponent is the 
potential theoretical Green function on the Julia set (which is zero exactly on the 
Julia set) and the Julia set for $c$ outside the Mandelbrot set is a Cantor set, 
the spectrum of $L_x$ never contains any absolutely continuous part. Also, unlike
in random operator theory, the spectrum of $L_x$ should be the same for {\bf all x}
in the dyadic group of integers and not only for {\bf almost all}.

\paragraph{}
The work on Jacobi operators over the group $\mathbb{Z}_2$ of dyadic integers is 
far from finished. We saw for example that we can interpolate 
B\"acklund transformations with isospectral deformations \cite{Kni93a} so that we are
able to connect $L$ with $L(T)$ within the isospectral set. This suggests
that the {\bf isospectral set} of Jacobi operators is topologically the 
topological group of the {\bf dyadic soleonid} which is the dual of the 
{\bf dyadic rationals} $\mathbb{Q}_2$. In the periodic case, the isospectral set is a finite
dimensional torus and an explicit algebro-geometric picture explaining how the auxiliary spectrum
(divisors on a hyperelliptic curve) serve as coordinates and that the Abel-Jacobi map translates
between this picture and the torus. This picture integrates the isospectral nonlinear Toda flows 
$L'=[B(L),L]$.  The Fourier transform then allows to look
instead at operators $\hat{L} u(x) = \hat{a}(x) u(x+1)+\hat{a}(x-1) u(x-1)$ on 
$l^2(\mathbb{Q}_2)$ which have the property that $\hat{L}^2 + c$ produces the operator 
$\hat{L}' u(x) = \hat{a}(x) u(x+2) + \hat{a}(x-2) u(x-2)$. 
We have now the strange situation that the Fourier transform of the operator on 
$\mathbb{Z}_2$ produces again operators of the same type. The function 
$a \in C(\mathbb{Z}_2)$ produces so a new function $\hat{a}$ on $\mathbb{Q}_2$
which produces sequences which have hulls which are again $C(\mathbb{Z}_2)$. 
There is a lot of symmetry in this non-commutative version of the quadratic map. There is much 
likely more to it as all happens in the field of dyadic numbers which like $\mathbb{R}$ is
self-dual. It is an interesting question whether the isospectral set of operators on the solenoid
then can be extended to the dyadic field. An other question is whether
{\bf integrable system} which can be written as a Lax pair 
(and almost all systems which are integrable do), can be embedded as an integrable 
system on the solenoid and so the dyadic field, providing so a universal vessel for integrability. 
\footnote{Tate's theorem states that all p-adic fields $\mathbb{Q}_p$ produce self-dual 
topological groups under addition similarly as $\mathbb{R}=\mathbb{Q}_0$ is a self-dual topological
group.}

\section{Historical addenda}

\paragraph{}
The history of Mandelbrot starts with the mathematics of {\bf complex dynamics}
\cite{Alexander1994,Carlson,Mandelbrot2004,MilnorNotes,Schleicher2009,Audin2014}.
About 100 years ago, Ernst Schr\"oder, Gaston Julia and 
Pierre Fatou pioneered the study of iterating maps in the complex maps. 
John Hubbard writes in \cite{TanLei2000}
{\it Holomorphic dynamics is a subject with an ancient history: 
Fatou, Julia, Schroeder, Koenigs, Bottcher, Lattes, 
which then went into hibernation for about 60 years, and came back to 
explosive life in the 1980's.}.

\paragraph{}
The question of connectivity of the Mandelbrot set goes back to Mandelbrot who 
first thought that the object is not connected (1985). Also Milnor conjectured
independently connectivity and that the Hausdorff dimension of the Mandelbrot 
set to be $2$ \cite{Milnor1989}. Douady and Hubbard proved the connectivity 
in 1981-1982. The Orsay notes (published \cite{DouadyHubbard} 
of A. Douady and J.H. Hubbard state that the notes were
based on lecture notes of Douady for a course "Holomorphic Dynamical systems 1983-1984).

\paragraph{}
Daniel White who defined the three dimensional version degree 2 Mandelbug
first, asked already about the connectivity of the object. This is still unexplored.
Paul Nylander wrote me about the pioneer days: 
"The earliest pioneer was Daniel White, and Thomas Ludwig was the first to create 3D renderings. 
I may have been the third person to get involved, as I began emailing Daniel and Thomas and 
sending my renderings to them, but I did not post anything to the discussion thread until 
much later. Other early contributors who were notable to me were David Makin and 
Krzysztof Marczak.". Marczak is credited in \cite{Odyssey} as an author of the Mandelbulb
software. 

\paragraph{}
It appears that various mathematicians have looked at Mandelbrot before. 
Brooks and Matelski who worked in Kleinian groups 
have given a talk at Harvard, while Mandelbrot was
at the department. They have produced computer generated pictures already and
showed them around. Also John Hubbard appears to have experimented with the object 
before. In the article \cite{Horgan2009}, one can see that even earlier,
in the 1950ies, Riesz might already have experimented with objects close to the 
Mandelbrot set. 
As for the Mandelbulb, it seems that after some experimentation of Rudy Rucker
and Jules Ruis (a document of Jules on ``Fractal Trigeometry" from March 14th 2012 states that
he created a Mandelbulb on a BBM-15 on December 29, 1997), 
it was Daniel White who first looked at the degree 2 Mandelbulb. 
He also named the object. John Nylander looked first at higher degree cases and 
generated rational expressions. Nylander also looked at the Hopfbrot. 

\paragraph{}
Why have Mandelbulbs not been investigated more mathematically? One reason could be that it 
is too hard. One would have to find an extension of the B\"ottcher-Fatou Lemma 
to the real case, where we have no compactness. 
We would have to show the Julia sets are non-empty and that the 
Green functions exist. Various topological statements need to be reworked, generalized or 
modified. We look in an appendix at a detailed version of the proof of Douady and Hubbard about
the connectivity of the standard Mandelbrot set. 

\paragraph{}
An other reason for the lack of interest could be that the
roots of the Mandelbulb are not in the hands of pure mathematicians.  
Rudy Rucker, a science fiction writer and hacker (in the sense of a gifted and creative programmer)
first experimented with it, Jules Ruis, an independent researcher in 1997 looked 
at first formula 
Daniel White, a math enthusiast with music and computing degree developed it further
in 2007; then in 2009, Paul Nylander, a mechanical engineer polished the now established version.
It would be interesting to see what would have happened if a famous mathematician 
would have conjectured that the Mandelbulb is connected. Without doubt, 
there might be more papers about it. Mathematics is also a social activity and 
social creatures are driven by social values like names who are behind conjectures.
Independent of fashion, it should be clear however that the connectivity problem is 
interesting. The object has not only drawn attention on Youtube. The book cover
\cite{Odyssey} features the Mandelbulb. 


\paragraph{}
As for the area of the Mandelbrot set, the question of estimating the area has been 
stated by many, in particular hobby mathematicians.
Since $M$ must be contained in  $[-2,1] \times [-3/2,3/2]$ which has area $9$, one
has a simple finite upper bound, a bit better already than the area $|\{|z| \leq 2\}|=4\pi$.
Better estimates can be obtained by looking at polynomials and compute the areas
using computer algebra systems. Here is how to compute these polynomials

\lstset{language=Mathematica} \lstset{frameround=fttt}
\lstset{} \begin{lstlisting}[frame=single]
p[0_,z_]:= z; p[n_,z_]:=p[n-1,z]^2+z;
ContourPlot[Abs[p[8,x+I y]]==2,{x,-2,1}, {y,-3/2,3/2}]
f=p[2,x+I*y]*p[2,x-I*y] // Expand
\end{lstlisting}

The polynomial $p_2(x,y)$ for  example is
\begin{tiny}
$f=x^8+4 x^7+4 x^6 y^2+6 x^6+12 x^5 y^2+6 x^5+6 x^4 y^4+14 x^4 y^2+5 x^4+$
    $12x^3 y^4+4 x^3 y^2+2 x^3+4 x^2 y^6+10 x^2 y^4+2 x^2 y^2+x^2+4 x y^6-2 x y^4+$
    $2 x y^2+y^8+2 y^6-3 y^4+y^2$.
\end{tiny}

A computer algebra system then can compute for example

\lstset{language=Mathematica} \lstset{frameround=fttt}
\lstset{} \begin{lstlisting}[frame=single]
Area[ImplicitRegion[f<4,{x,y}]]
\end{lstlisting}

which gives the result $4.14653$.
Using interval arithmetic, one can like this get computer assisted 
(rigorous) upper bounds.

\paragraph{}
An other method is to use {\bf Monte Carlo} which gives typically values around 1.5.
Monte Carlo integration is by the law of large numbers a numerical method for
computing the Lebesgue measure. Here is a Mathematica line which produces estimates using Monte-Carlo. 

\lstset{language=Mathematica} \lstset{frameround=fttt}
\lstset{} \begin{lstlisting}[frame=single]
M=Compile[{x,y},Module[{z=x+I y,k=0},
  While[Abs[z]<2.&&k<999,z=N[z^2+x+I y];++k];Floor[k/999]]];
n=10^6; 9*Sum[M[-2+3*Random[],-1.5+3*Random[]],{n}]/n
\end{lstlisting}

\paragraph{}
\cite{EwingSchober} give bounds $[1.3744,1.7274]$ where the left side is $7\pi/16$ and
the upper bound uses partial sums.  \cite{BCGN} gives the upper bound $1.68288$
One particular source gives the bounds $[1.50311,1.5613027]$.

\section*{Appendix A: From the Sphere to the Mandelbulb} 

\paragraph{}
In this 30 minutes talk, we build a visual bridge from Greek mathematics to 
the present, where 3D printing and 3D scanning have become affordable. 
We live in an exciting time, where several revolutions happen simultaneously.
This short document summarizes the main points of the talk. 
\footnote{These are notes from a talk from 2013. The text is unchanged except 
for fixing typos and where indicated. 
The historical assessment had been what I had in mind at that time.}

\paragraph{}
Science, technology and education experiences a time of rapid change. 
Things which would have been considered impossible 20 years ago are now possible or 
available on the level of consumer technology. Jeremy Rifkin once coined the term 
{\bf "third industrial revolution"} and called it a time, where manufacturing 
becomes digital, personal and sustainable. We are in the third of the three 
industrial revolutions which one could coin {\bf "steam, steal and star"} revolutions. 
Media revolutions like {\bf Gutenberg's press}, the {\bf telegraph} are now 
topped with the emergence of the {\bf internet} and social networks. 
We had {\bf print, talk and connect} revolutions. Information storage media sizes
have exploded. We have come a long way since Clay tablets, papyrus, paper, vinyl. 
These physical media have in rapid succession been replaced with CDs and DVDs, 
tapes, hard drives and now memory sticks, SSD's. 

\paragraph{}
The three {\bf optical, magnetic and electric} revolutions came within a few decades only.
In education, we have seen {\bf "New math"} in the sixties, the {\bf "Math wars"}
in the eighties and {\bf Massive Open Online Courses} (MOOC's) at the beginning of the 21st century. 
These revolutions happened for {\bf generation X, generation Y and generation Z}. For media, we have
seen the emergence of {\bf photograph}, {\bf film} and now witness {\bf 3D scanning and printing}: 
these are {\bf "picture, movie and object"} revolutions. We have first explored the 
{\bf macro scale} (cosmos) by measuring out planets and stars, then the 
{\bf micro-scale} (atomic level) with physics, chemistry and biology and conquer now the 
{\bf meso-scale}, the level of our daily lives, which also includes biology. 

\paragraph{}
What does this have to do with math? Describing complex geometric objects by 
mathematical language is now possible thanks to {\bf 3D scanners}. 
We can copy the objects using {\bf 3D printers}. 
Understanding the motion of stars led to {\bf calculus} and {\bf topology} gaining insight into 
quantum mechanics required {\bf analysis} and {\bf algebra}, the meso-scale feeds from 
knowledge about {\bf geometry} and {\bf computer science}. \\

\paragraph{}
This section uses work done in \cite{CFZ} and \cite{Slavkovsky}.
Even so we live in a revolutionary time, we should not forget about the past.
Early achievements in mathematics and technology made the present progress
possible. Archimedes, who would now be 2300 years old, was an extraordinary 
mathematician and engineer. We do not have a picture of him, but artists 
have tried to capture the legend. A common scene with Archimedes is 
the moment of his death, when Syracuse was sacked. Archimedes tomb stone was decorated
with a symbol of his most important achievement: the computation of 
the {\bf volume of the sphere}. 

\paragraph{}
Archimedes had realized that if one cuts a sphere at height $z$
then the area of the cross section $\pi (1-z^2)$ is the same than the difference
of the area of a cross section of a unit cylinder $\pi$ and the cross section with
a cone $\pi z^2$. The volume of the half sphere is therefore the difference
between the volume of the cylinder $\pi$ and the volume of the cone $\pi/3$ which 
gives $2\pi/3$. While we know now how to compute the volume of the sphere using calculus, 
this had not been known before Archimedes. 

\paragraph{}
In order to translate this discovery into modern times, 
let us look at a problem which we presently do not know how to solve. 
It might be easy with an Archimedean type idea, but it might also be impossible,
like the {\bf quadrature of the circle}. 
\footnote{We understood ``impossible" in a way that there is no explicit formula,
that the number either transcendental unrelated to any other transcendental number we 
know or to express in a fancy manner that the volume is not a so called ``period" as
explained below.}
The object we are going to look at is called the {\bf Mandelbulb set} $M_8$. 
Unlike the {\bf Mandelbrot set}, which was discovered 33 years ago, the
set  $M_8$ is an object in space has been discovered only recently.

\paragraph{}
Already {\bf Rudy Rucker} had experimented with a variant $M_2$ in 1988 in Mathematica.
{\bf Jules Ruis} wrote me to have written a Basic program in 1997 and also
3D printed the first models in 2010. 
The first mentioning of the formulas for $M_2$ used today is by {\bf Daniel White} mentioned in a 
2009 fractal forum. [Added in Sepember 2022: 
Paul Nylander was the first to look at higher degree bulbs $M_d$ like $M_8$ 
as well as higher dimensional versions like Hopfbrot in $\mathbb{R}^4$. Nylander also first created 
algebraic expressions for the iteration.]
A problem one could ask is: is there a closed expression for the volume of $M_k$? 
What is a closed expression? The problem to find an expression of
$\zeta(2)= 1+1/4+1/9+1/16+ \dots $ is called the {\bf Basel problem}. It was 
solved by Euler, who found that the limit is $\pi^2/6$. While one knows similar formulas for sums 
$\zeta(k) = 1+1/2^k + 1/3^k + \dots $ for all even $k$, one does not have a closed
formula for $k=3$ for example. Mathematicians call $\zeta(2)$ a {\bf period}, a number
which can be expressed as an integral of an algebraic function over an algebraic domain.

\paragraph{}
The You-Tube star Mandelbulb $M_8$ is defined as
follows: for every three vector $c$ define a map $T_c$ in space which maps a point
$x$ to a new point $T_c(x) = x^8 + c$, where $x^8$ is the point with spherical coordinates
$(r^8,8 \phi,8\theta)$, if $x$ had the spherical coordinates $(r,\phi,\theta)$. 
The set consists now of all parameter vectors $c$ for which iterating $T_c$ again and again 
starting at $0$ leads to a bounded orbit. For example, if we take $c=(0,0,0)$ and start
with $(0,0,0)$, then we remain at $(0,0,0)$. Therefore, $c=(0,0,0)$ is in $M_8$.
Now take $c=(0,0,2)$. Lets look at the orbit $(0,0,0)$, which is 
$(0,0,0),(0,0,2),(0,0,2^8),...$ so that $(0,0,2)$ is outside  of $M_8$. 

\paragraph{}
The Mandelbulb set is mathematically not well explored. It is most likely connected and simply 
connected like the Mandelbrot set but there could be surprises. Not much mathematical seems to
been proven. Math reviews and ArXiv show no references yet.
Here is an exercise for you: prove that the boundary of the Mandelbulb set is contained 
in the shell defined by the sphere of radius $2$ and $1/2$. Now try to improve
this bound. Low resolution picture of the Mandelbulb set can be computed with Mathematica's 
RegionPlot and spit out a STL file:
\footnote{In 2013, I had called it the White Mandelbulb set. But Paul Nylander defined first the higher genus 
versions like this one} 
\footnote{The Mandelbulb is available on thingiverse https://www.thingiverse.com/thing:26324}

\begin{figure}[!htpb]
\scalebox{0.22}{\includegraphics{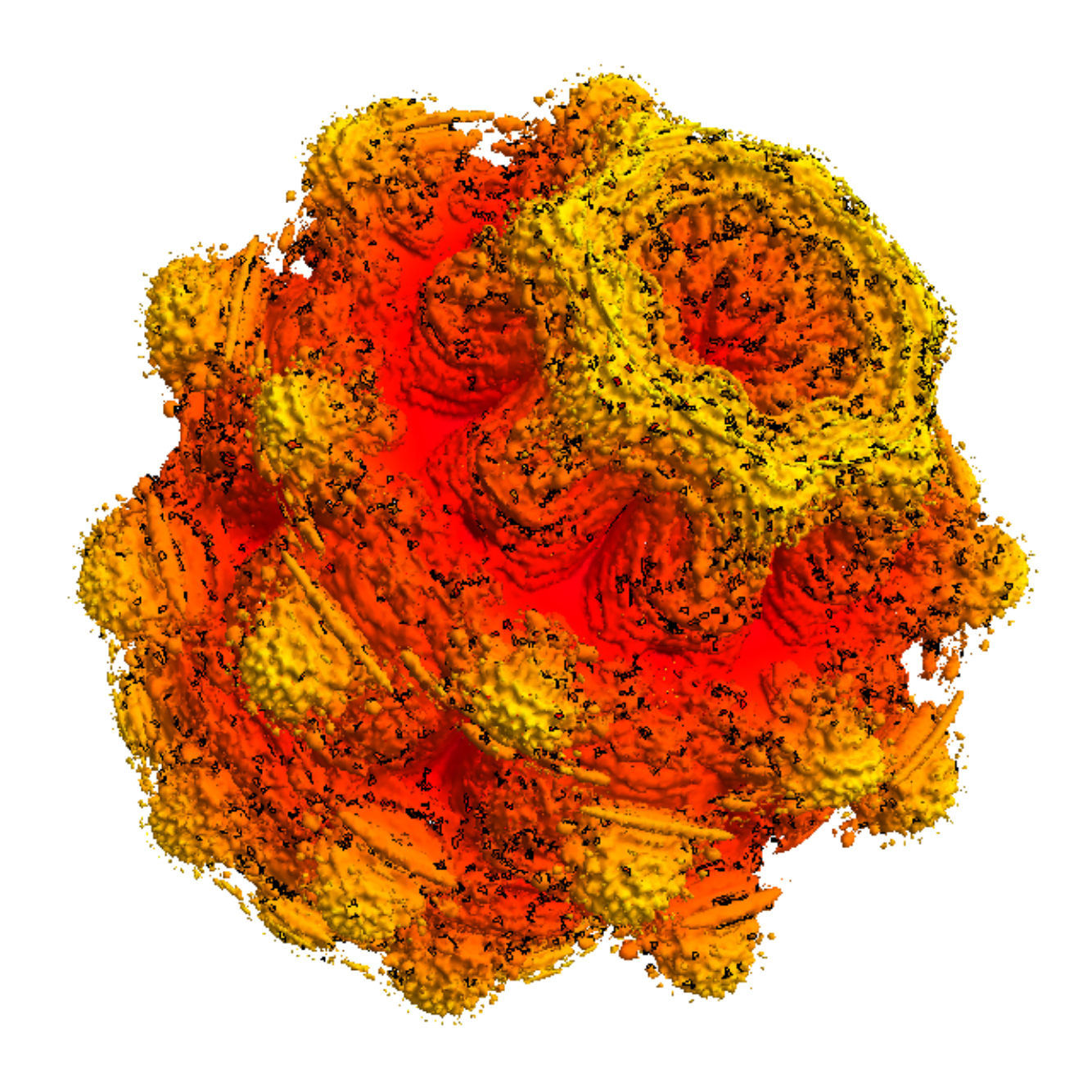}}
\scalebox{0.22}{\includegraphics{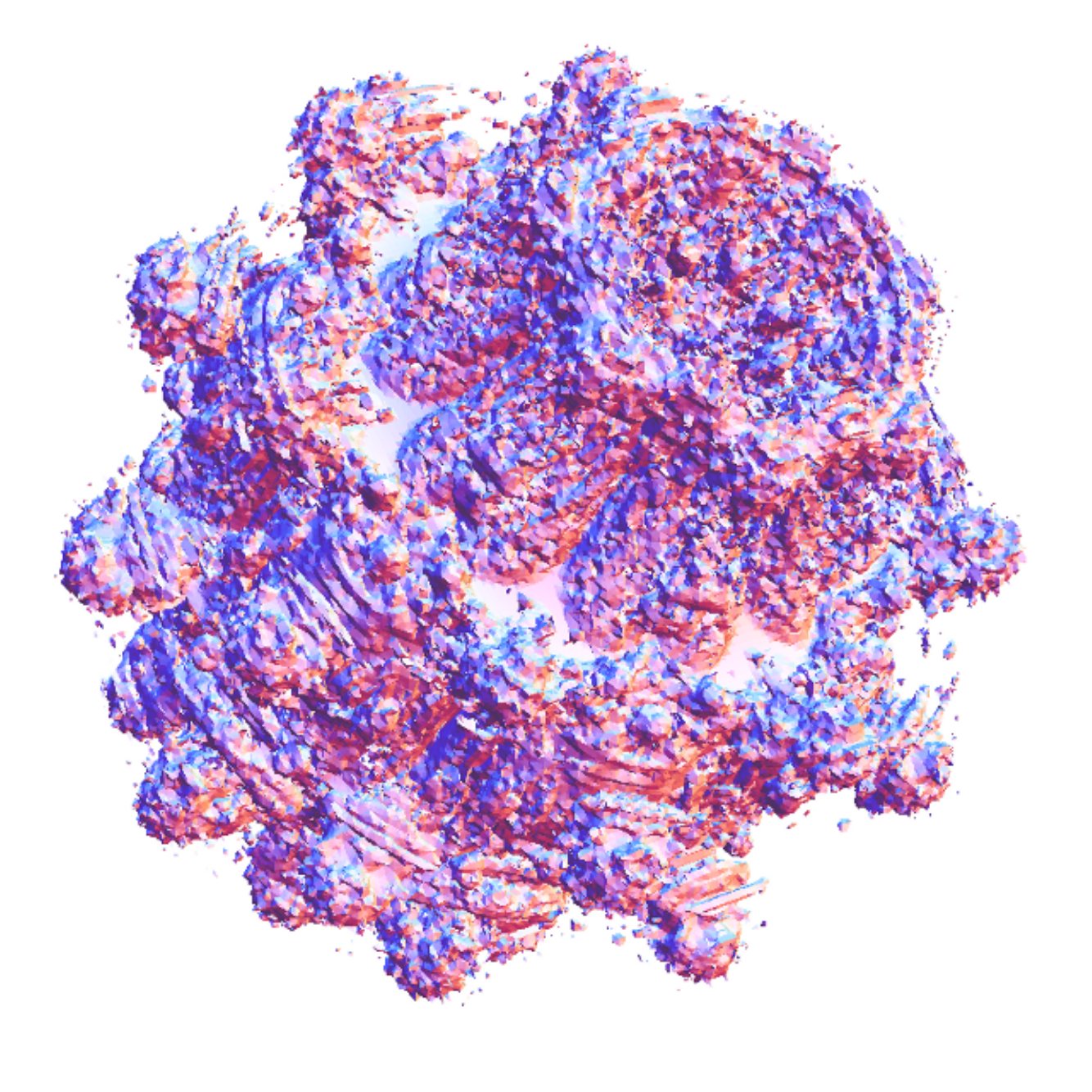}}
\caption{
The White-Nylander Mandelbulb set 
$M_8$ seen as a
printable STL file. White asked in 2008: Is it connected? Simply connected?
Locally connected? Is the complement simply connected? We asked here whether
the volume of $M_8$ is a period (a branch of mathematics called measure theory
assures that the volume of of the set exists). Printable Mandel and Julia bulbs
have been produced in \cite{mandelbulb1}.
}
\end{figure}

\paragraph{}
Some pictures, animations and movies shown in this part of the lecture were borrowed from
my talk given at a ICTP workshop in Trieste and the article \cite{CFZ}.
Most of the pictures were done with a 3D printer in mind. 
{\bf Thinking like a printer makes pictures nicer.} One can not print surfaces of zero thickness
for example. Graphics needs to be redesigned. This can pay off. The animations of the 
Calabi-Yau surfaces for example show the object in new beauty. An other side effect of 3D printing
is that we can now plot solid graphs of functions which can be placed in Google Earth
to fly around. We illustrate ``impossible figures" like the {\bf Escher stairs}, the 
{\bf Penrose triangle}, as well as the Mandelbrot set as an island in the Boston Harbor 
in this virtual reality setup. 

\paragraph{}
We first look at the problem of reconstructing a three
dimensional object from projections. This is called the {\bf structure
from motion problem}. We use the affordable Microsoft Kinect device as a scanner and 
illustrate this.  The program Artec Studio 9.1, reconstructs the object 
almost in real time, during the talk. \\

\paragraph{}
{\bf Acknowledgements:} Thanks to {\bf Gloria Li} for the invitation to
talk at the Science Initiative Competition (2013). My talk with title
``From Archimedes to 3D Printers" covered work done which was done
with {\bf Elizabeth Slavkovsky} \cite{KnillSlavkovskyArchimedes,CFZ}
(Elizabeth wrote a thesis \cite{Slavkovsky} which I advised)
and was inspired by a workshop on 3D printing in Trieste by 
{\bf Enrique Canessa}, {\bf Carlo Fonda} and {\bf Marco Zennaro},
where {\bf Daniel Pietrosemoli} showed how to use the Kinect for
3D scanning. Some of the slides are reused from the talk given at the ICTP 
in Trieste. Thanks to {\bf Anna Zevelyov} at Artec for an educational evaluation 
license of Artec Studio 9.1 which could be demonstrate live during the talk on a
Macbook air running Windows 7. Update: October 7, 2013: thanks to 
{\bf Jules Ruis} and {\bf Rudy Rucker} for comments on the history of the Mandelbulb. 
Update September 15, 2021: thanks to {\bf Paul Nylander} for answering more of my
questions about the origin of the bulb. 
{\bf Added September 2022:} The original document from 2013 is still available on my
website. In this version, some typos were fixed.

\section*{Appendix B: the Douady-Hubbard proof}

\paragraph{}
The proof of the theorem of Douady and Hubbard \cite{DouadyHubbard82} of the
connectivity of the Mandelbrot set needs some concepts from 
topology and complex analysis and topology. This section had been a handout
from a Math 118R course given at Harvard in 2005. 
I myself learned the Douady-Hubbard proof from {\bf Jochen Denzler}, who was an undergraduate
mathematics student like me at et ETH and who had presented it in an undergraduate
seminar run by J\"urgen Moser in 1995 at ETH Z\"urich. (I had presented in that 
seminar a stability theorem of Rabinowitz for multi-dimensional holomorphic 
complex dynamical systems which dealt with the question under which conditions
such a holomorphic dynamical system can be linearized near a fixed point. 

\begin{figure}[!htpb]
\scalebox{0.80}{\includegraphics{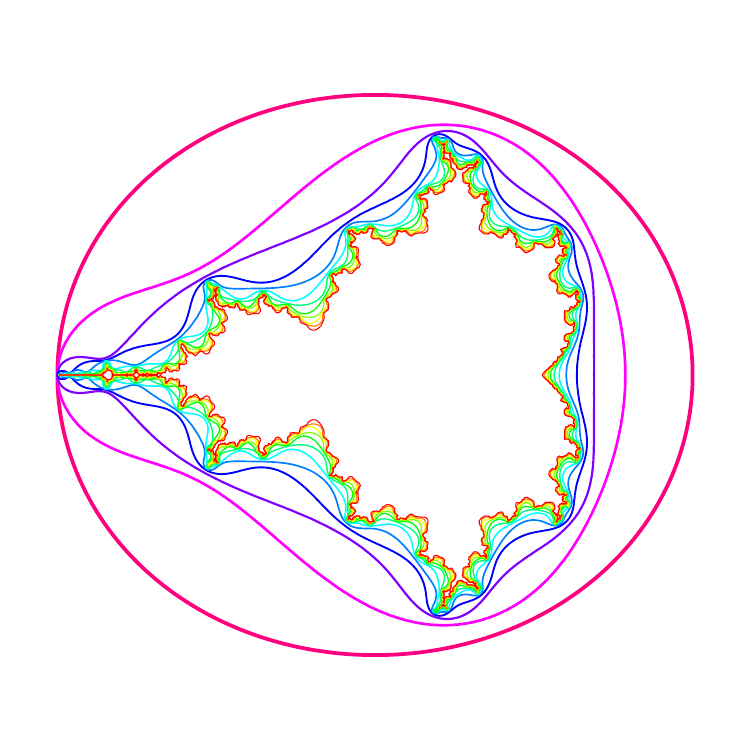}}
\caption{
We see some level curves of Green function. 
We can also illustrate it using level curves. Computing
such level curves and the area they enclosed can be used to give rigorous 
upper bounds on the {\bf area} of the Mandelbrot set. 
}
\end{figure}

\paragraph{}
A function $f: \mathbb{C} \to \mathbb{C}$ is called {\bf analytic} in an open set 
$U$ if the derivative $f'(z)=\lim_{w \to 0} (f(z+w)-f(z))/w$ of $f$ exist and are 
continuous at every point $z$ in $U$. This means that for 
$f(z)=f(x+iy)=u(x+iy)+iv(x+iy)$, the partial derivatives $u_x,u_y,v_x,v_y$
are all continuous real-valued functions on $U$. In that case, $u(x,y),v(x,y)$
are {\bf harmonic}: $u_{xx}+u_{yy}=0$. The open set
$B_r(z)= \{ w \; | \; |z-w| < r \; \}$ is a neighborhood
of $z$ called an {\bf open ball}. A sequence of analytic maps $f_n$ 
{\bf converges uniformly} to $f$ on a compact set $K \subset U$, if 
$f_n \to f$ in the space $C(K)$ of continuous functions on $K$. This 
means $\max_{x \in K} |f_n(x)-f(x)| \to 0$. 
A family of analytic maps ${\mathcal{F}}$ on $U$ is called {\bf normal}, if 
every sequence $f_n \in {\mathcal{F}}$ has a sub-sequence which {\bf converges uniformly}
on any compact subset of $U$. The limit function $f$ does not need to be in ${\mathcal{F}}$.
Normality is just pre-compactness: a set ${\mathcal{F}}$ is normal, if and only if 
its closure is compact. The {\bf theorem of Arzela-Ascoli} (see \cite{AhlforsComplexAnalysis} p. 224) 
states says that normality of ${\mathcal{F}}$ is equivalent to the requirement that each $f$ is
{\bf equi-continuous} on every compact set $K \subset U$ and if for every $z \in U$, the set 
$\{ f(z) \; |\; f \in {\mathcal{F}} \}$ is bounded. 
The {\bf Fatou set} of $f$ is defined to be the set of $z \in \mathbb{C}$ for which the family 
$\{f^n\}_{n \in N}$ is normal in some open neighborhood of $z$. 
The {\bf Julia set} is the complement of the Fatou set. A set is called {\bf locally compact}, 
if every point has a compact neighborhood. In the plane, a set is compact if and only if it is
bounded and closed. A subset is closed, if and only if its complement is
open. A subset $U$ is open, if for every point $x$ in $U$ there is a
ball $B_r(x)$ which still belongs to $U$. For notions of complex analysis, see 
\cite{AhlforsComplexAnalysis,Conway1978}.

\begin{lemma}[B\"ottcher-Fatou lemma]
Assume $f(z)=z^k+a_{k+1} z^{k+1} + \cdots$ with $k \geq 2$
is analytic near $0$. Define $\phi_n(z)=(f^n(z))^{1/k^n}=z+a_1 z^2 + \dots$.
In a neighborhood $U$ of $z=0$, define 
$\phi(z)=\lim_{n \to \infty} \phi_n(z): U \to B_r(0)$.
It satisfies $\phi \circ f \circ \phi^{-1}(z)=z^k$ and $\phi(0)=0$ and $\phi'(0)=1$.
\end{lemma}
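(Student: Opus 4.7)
The plan is to reduce to a uniformly convergent product and then check the three properties (fixed point, derivative one, conjugacy) from the construction. First I would write $f(z) = z^k(1+g(z))$ with $g$ analytic near $0$ and $g(0)=0$, and inductively express $f^n(z) = z^{k^n}(1+h_n(z))$ with $h_n(0)=0$. Choosing the principal branch of the $k^n$-th root (which is fine because $1+h_n$ stays near $1$ on a small disk), this gives $\phi_n(z) = z\,(1+h_n(z))^{1/k^n}$, so each $\phi_n$ is analytic with $\phi_n(0)=0$ and $\phi_n'(0)=1$. The key identity I would exploit is the telescoping relation
\begin{equation*}
\frac{\phi_{n+1}(z)}{\phi_n(z)} = \bigl(1+g(f^n(z))\bigr)^{1/k^{n+1}},
\end{equation*}
obtained from $f^{n+1}(z) = (f^n(z))^k(1+g(f^n(z)))$ and the $k^{n+1}$-th root.

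Next I would establish the super-exponential contraction of $f^n$ near $0$. On a disk $|z|\le r$ with $r$ small, we have $|f(z)| \le 2|z|^k$; setting $u_n = 2^{1/(k-1)}|f^n(z)|$ yields $u_{n+1} \le u_n^k$, so $u_n \le u_0^{k^n}$ and $|f^n(z)|$ decays like a $k^n$-th power. Consequently $|g(f^n(z))| \le C|f^n(z)|$ is majorized by a super-exponentially small sequence, so the infinite product
\begin{equation*}
\phi(z) = z \prod_{n=0}^{\infty} \bigl(1+g(f^n(z))\bigr)^{1/k^{n+1}}
\end{equation*}
converges absolutely and uniformly on the small disk $U=B_r(0)$, hence defines an analytic $\phi$ on $U$ with $\phi(0)=0$ and, because the product equals $1$ at $z=0$ and uniform convergence preserves the derivative, $\phi'(0)=1$.

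Then I would verify the conjugacy identity. The definition gives directly
\begin{equation*}
\phi_n(f(z)) = (f^n(f(z)))^{1/k^n} = (f^{n+1}(z))^{1/k^n} = \bigl(\phi_{n+1}(z)\bigr)^k,
\end{equation*}
provided the branches are chosen compatibly, which they are by our principal-branch choice for $z$ near $0$ (both sides equal $z^k + O(z^{k+1})$, so they agree on a neighborhood by analytic continuation). Passing to the limit $n \to \infty$ yields $\phi(f(z)) = \phi(z)^k$. Since $\phi'(0)=1$, the inverse function theorem produces a local analytic inverse $\phi^{-1}$, and this gives the stated conjugacy $\phi \circ f \circ \phi^{-1}(w) = w^k$.

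The main obstacle I anticipate is not the estimate—once the super-exponential decay of $|f^n(z)|$ is in hand, absolute convergence of the product is automatic—but rather the careful bookkeeping of the $k^n$-th root branches so that the identities $\phi_n(f(z)) = \phi_{n+1}(z)^k$ and $\phi_n(z) = z + O(z^2)$ hold for the \emph{same} branch at every step. I would handle this by fixing once and for all the principal branch $(1+w)^{1/m}$ on $\{|w|<1\}$ for every integer $m\ge 1$, shrinking $r$ if necessary so that $|h_n(z)|<1$ and $|g(f^n(z))|<1$ uniformly, and checking uniqueness of the branch by matching the leading term $z$ in the Taylor expansion.
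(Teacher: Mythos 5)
Your proposal is correct and follows essentially the same route as the paper: both arguments write $\phi$ as the telescoping product $z\prod_n \phi_{n+1}/\phi_n$, identify the factors as $(1+g(f^n(z)))^{1/k^{n+1}}$ (the paper packages this as $\frac{1}{k^n}h(f^n(z))$ with $h(z)=\log(f(z)^{1/k}/z)$), and get uniform absolute convergence from the $1/k^{n}$ damping together with a bound on $|f^n(z)|$. Your extra super-exponential decay estimate and the explicit branch bookkeeping are refinements the paper leaves implicit, but the underlying argument is the same.
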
 

\begin{proof}. We show that $\phi_n$ converges uniformly. 
The properties $\phi(f(z))=\phi(z)^k$ as well as $\phi(0)=0$ and $\phi'(0)=1$
follow directly from the assumptions. The function
$$ h(z):=\log(\frac{f(z)^{1/k}}{z}) $$
with the chosen root $f(z))^{1/k}=z+O(z^2)$
is analytic in a neighborhood $U$ of $0$ and there exists a constant $C$
such that $|h(z)|\leq C|z|$ for $z \in U$. $U$ can be chosen so small
that $f(U) \subset U$ and $|f(z)| \leq |z|$.
We can write $\phi(z)$ as an infinite product
$$ \phi(z)= z \cdot \frac{\phi_1(z)}{z} \cdot \frac{\phi_2(z)}{\phi_1(z)} 
              \cdot \frac{\phi_3(z)}{\phi_2(z)} \dots  \; . $$
This product converges, because
$\sum_{n=0}^{\infty} \log \frac{\phi_{n+1}(z)}{\phi_n(z)}$ converges
absolutely and uniformly for $z \in U$:
$$|\log \frac{\phi_{n+1}(z)}{\phi_n(z)}| 
     = |\log \left[ \frac{ (f \circ f^n(z))^{1/k} }{f^n(z)} \right]^{1/k^n}| 
     = \frac{1}{k^n} \cdot | h(f^n(z)) | \leq \frac{1}{k^n} C \cdot |f^n(z)| 
     \leq \frac{C \cdot |z|}{k^n}  \; . $$
\end{proof}

\begin{coro}
If $c \mapsto f_c(z)$ is a family of analytic maps such that
$c \mapsto f_c(z)$ is analytic for fixed $z$, and $c$ is in a compact
subset of $C$, then the map $(c,z) \mapsto \phi_c(z)$ is analytic
in both variables.
\end{coro}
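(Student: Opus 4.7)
The plan is to revisit the proof of the B\"ottcher--Fatou lemma above and upgrade each of its estimates to hold uniformly in the parameter $c$ as it ranges over a compact set $K$. Since each finite-stage approximation $\phi_{n,c}(z) := (f_c^n(z))^{1/k^n}$ is jointly analytic in $(c,z)$, uniform convergence on a joint neighborhood of the form $K \times U$ is enough to obtain joint analyticity of the limit $\phi_c(z)$ by the Weierstrass convergence theorem in two complex variables.

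First I would make the geometric setup uniform in $c$. Because $c \mapsto f_c(z)$ is analytic for each fixed $z$ and $f_c(z) = z^k + a_{k+1}(c) z^{k+1} + \cdots$, the Taylor coefficients $a_j(c)$ are analytic (hence bounded) on $K$. This lets me select a single radius $r > 0$ and a single constant $C > 0$ such that, for every $c \in K$ and every $z \in U := B_r(0)$, one has $f_c(U) \subset U$, $|f_c(z)| \leq |z|$, the branch $f_c(z)^{1/k} = z + O(z^2)$ is well defined and jointly analytic in $(c,z)$, and
$$ |h_c(z)| := \left| \log\!\frac{f_c(z)^{1/k}}{z} \right| \leq C|z|. $$
The uniform bound on $h_c$ is the counterpart, parametrized in $c$, of the inequality $|h(z)| \leq C|z|$ used in the original lemma.

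Next I would note that for each fixed $n$, the map $(c,z) \mapsto \phi_{n,c}(z)$ is jointly analytic on $K \times U$: iterated composition $(c,z) \mapsto f_c^n(z)$ preserves joint analyticity, and the branch of the $k^n$-th root is pinned down by the normalization $\phi_{n,c}(z) = z + O(z^2)$, which is unambiguous and analytic in $(c,z)$ near $z = 0$. Rerunning the telescoping estimate from the lemma uniformly in $c \in K$ gives
$$ \left| \log \frac{\phi_{n+1,c}(z)}{\phi_{n,c}(z)} \right| \leq \frac{C|z|}{k^n}, $$
whose right-hand side is summable in $n$ independently of $(c,z) \in K \times U$. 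Hence the telescoping product defining $\phi_c(z)$ converges absolutely and uniformly on $K \times U$.

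The Weierstrass convergence theorem (applied on polydiscs in $\mathbb{C}^2$) then yields that the uniform limit $\phi_c(z)$ of the jointly analytic $\phi_{n,c}(z)$ is itself jointly analytic in $(c,z)$ on the interior of $K \times U$; since the compact $K$ was arbitrary, $(c,z) \mapsto \phi_c(z)$ is jointly analytic wherever it is defined. The main obstacle is the first step: extracting a common radius $r$ and a common constant $C$ that work simultaneously for all $c \in K$, and verifying that the $k^n$-th root branches patch together into a \emph{single} analytic function of $(c,z)$ rather than a collection of pointwise branches. Both are handled by shrinking $U$ enough that $f_c(z)/z^k$ stays in a fixed disc around $1$ uniformly on $K \times U$, after which the principal branch of the logarithm gives a canonical, jointly analytic choice.
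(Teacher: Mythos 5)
Your proposal is correct and is essentially the paper's own argument: the paper's proof consists precisely of the remark that the maps $(c,z)\mapsto\phi_n(c,z)$ are jointly analytic and that the infinite product from the B\"ottcher--Fatou lemma converges absolutely and uniformly on a neighborhood of $0$, which you have simply spelled out in detail (uniform choice of $r$ and $C$ over the compact parameter set, the branch normalization, and the Weierstrass convergence theorem).
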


\begin{proof}
Use the same estimates as in the previous proof:
the maps $(c,z) \mapsto \phi_n(c,z)$ are analytic and the infinite product
converges absolutely and uniformly on a neighborhood $U$ of $0$.
\end{proof} 

\begin{lemma} The Julia set $J_c$ is a compact nonempty set. \end{lemma}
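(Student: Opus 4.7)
The plan is to split the statement into the three standard components: closedness of $J_c$, boundedness of $J_c$, and nonemptiness of $J_c$. Closedness is immediate from the definition, since normality in \emph{some} neighborhood is an open condition on $z$, so the Fatou set is open and $J_c$ is closed. Boundedness will follow from a large-modulus escape estimate combined with the B\"ottcher-Fatou lemma applied at $\infty$.

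For boundedness, I would choose $R \geq 2$ with $R \geq |c|+1$ (or any analogous explicit threshold for degree $d$) and show that on $\{|z|>R\}$ one has $|f_c(z)| \geq |z|^d - |c| > |z|$, so $|f_c^n(z)| \to \infty$ uniformly on compact subsets of $\{|z|>R\}$. To upgrade escape to normality, I would change coordinates $w = 1/z$, obtaining a map $\tilde f(w)$ of the form $w^d + O(w^{d+1})$ fixing $w = 0$; applying the B\"ottcher-Fatou lemma just proved produces a neighborhood $U$ of $0$ in $w$-coordinates on which the iterates of $\tilde f$ are analytically conjugate to $w \mapsto w^d$, hence form a normal family. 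Translating back, this gives a neighborhood of $\infty$ inside the Fatou set, so $J_c \subset \{|z| \leq R\}$ and $J_c$ is compact.

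For nonemptiness I would argue by contradiction. Assume $J_c = \emptyset$, so $\{f_c^n\}$ is a normal family on all of $\mathbb{C}$ when we allow limits in the Riemann sphere $\widehat{\mathbb{C}}$. Pick any subsequence $f_c^{n_k}$ converging locally uniformly on $\mathbb{C}$ to a meromorphic map $g \colon \mathbb{C} \to \widehat{\mathbb{C}}$. By the boundedness step, $f_c^{n_k}(z) \to \infty$ for every $z$ with $|z|>R$, so $g \equiv \infty$ on the open set $\{|z|>R\}$. Applying the identity principle to $1/g$ forces $g \equiv \infty$ on all of $\mathbb{C}$. On the other hand, the equation $z^d - z + c = 0$ has a root $z_0 \in \mathbb{C}$, and $f_c^{n_k}(z_0) = z_0$ for every $k$, so $g(z_0) = z_0 \neq \infty$, a contradiction.

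The main obstacle I anticipate is the nonemptiness step, specifically the need to work on the Riemann sphere so that the limit $g$ is well-defined as a meromorphic map; once that framework is in place the identity principle kills the assumption. The only other care point is the boundedness step, where the explicit escape radius and the passage to $w = 1/z$ must be consistent with the hypotheses of the B\"ottcher-Fatou lemma as stated, but this is routine bookkeeping.
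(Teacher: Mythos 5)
Your proof is correct, and the closedness and boundedness steps match the paper's (the paper is terser on boundedness, simply invoking B\"ottcher--Fatou to say a neighborhood of $\infty$ escapes, whereas you spell out the escape radius and the conjugation at $w=1/z$; that is the same argument with the bookkeeping done). The genuine divergence is in the nonemptiness step. The paper argues that if $J_c=\emptyset$ the family $\{f_c^n\}$ is normal on all of $\overline{\mathbb{C}}$, so a subsequential limit is a rational function $P/Q$; by a Hurwitz/argument-principle count the number of zeros of $f_c^{n_k}$ must eventually stabilize, yet it grows like $d^{n_k}$, a contradiction. You instead observe that any subsequential limit $g$ is identically $\infty$ on $\{|z|>R\}$ by the escape estimate, hence identically $\infty$ everywhere by the identity principle, while the finite fixed point $z_0$ of $z\mapsto z^d+c$ (a root of $z^d-z+c$, guaranteed by the fundamental theorem of algebra) forces $g(z_0)=z_0\neq\infty$. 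Your route is more elementary --- it avoids zero-counting and Hurwitz-type continuity of the number of zeros, needing only the identity principle and one fixed point --- and it is self-contained given the boundedness step you already proved. The paper's degree-growth argument is the one that transfers directly to general rational maps of degree $\geq 2$, where a convenient finite fixed point with a uniformly escaping neighborhood of $\infty$ need not be available. Both are complete proofs of the lemma as stated.
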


\begin{proof}
(i) The Julia set is bounded:
the Lemma of Boettcher-Fatou implies that every point $z$
with large enough $|z|$ converges to $\infty$. This means
that a whole neighborhood $U$ of $z$ escapes to $\infty$.
In other words, the family ${\mathcal{F}}=\{f_c^n\}_{n \in N}$ is normal,
because every sequence in ${\mathcal{F}}$ converges to the constant function
taking the value $\infty$. \\

(ii) The Julia set is closed: this follows from the definition,
because the Fatou set $F_c$ is open and the complement of an open set is closed. \\

(iii) Assume the Julia set was empty. The family ${\mathcal{F}}=\{f_c^n\}$ would be
normal on $\overline{C}$. This means that for any sequence $f_n$ in ${\mathcal{F}}$,
there is a subsequence $f_{n_k}$ converging to an analytic function
$f:\overline{C} \to \overline{C}$. Because such a function can have only
finitely many zeros and poles, it must be a rational function $P/Q$,
where $P,Q$ are polynomials. If $f_{n_k} \to f$, there are
eventually the same number of zeros of $f_{n_k}$ and $f$. But the number
of zeros of $f_{n_k}$ (counted with multiplicity) grows monotonically.
This contradiction makes  $J_c=\emptyset$ impossible.
\end{proof}

\begin{coro}The Julia set $J_c$ is contained in the {\bf filled in Julia} set $K_c$,
the union of $J_c$ and the bounded components of the {\bf Fatou set} $F_c$.
\end{coro}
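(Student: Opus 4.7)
The inclusion $J_c \subseteq K_c$ is immediate from the given definition of $K_c$ as the union of $J_c$ with the bounded components of $F_c$, so the real content of the corollary is to confirm that this decomposition coincides with the classical filled-in Julia set $\widetilde{K}_c = \{z : \{f_c^n(z)\}_{n \ge 0} \text{ is bounded}\}$. My plan is to establish $\widetilde{K}_c = J_c \cup \{\text{bounded components of } F_c\}$, which legitimates both descriptions and, in particular, yields $J_c \subseteq K_c$.

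First I would produce the basin of infinity. Applying the B\"ottcher-Fatou Lemma near $\infty$ (in the chart $w = 1/z$, where $f_c(z) = z^d + c$ becomes a germ of the form $w \mapsto w^d + O(w^{d+1})$ at the superattracting fixed point $0$) provides an open neighborhood $V$ of $\infty$ on which $f_c^n \to \infty$ uniformly. The connected component $F_\infty$ of $F_c$ containing $V$ is therefore an unbounded Fatou component and is disjoint from $\widetilde{K}_c$.

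Next I would show $F_\infty$ is the only unbounded Fatou component. If $W$ is a Fatou component containing a sequence $z_m$ with $|z_m| \to \infty$, then by normality a subsequence $f_c^{n_k}$ converges uniformly on compact subsets of $W$ to an analytic function $g \colon W \to \widehat{\mathbb{C}}$. Since points of sufficiently large modulus iterate to $\infty$, $g$ equals $\infty$ on a subset of $W$ with an accumulation point, so by the identity theorem $g \equiv \infty$ on $W$; by connectedness $W = F_\infty$. Every other Fatou component $W'$ is therefore bounded, and the forward orbit of any $z \in W'$ stays in the bounded set $\bigcup_n f_c^n(W')$, hence lies in $\widetilde{K}_c$.

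Finally I would show the Julia set itself lies in $\widetilde{K}_c$. The Fatou set is completely invariant under $f_c$ (both $f_c(F_c) \subseteq F_c$ and $f_c^{-1}(F_c) \subseteq F_c$, because normality of $\{f_c^n\}$ is preserved under pre- and post-composition with the open analytic map $f_c$), so the same holds for the complement $J_c$. Since $J_c$ is compact by the previous lemma, the forward orbit of every $z \in J_c$ stays in the compact set $J_c$ and is bounded, giving $J_c \subseteq \widetilde{K}_c$. Combining this with the trichotomy for Fatou components yields $\widetilde{K}_c = J_c \cup \{\text{bounded components of } F_c\} = K_c$. The main obstacle I expect is the identity-theorem step that rules out additional unbounded Fatou components: care is required because the limit function $g$ may take the value $\infty$, so one must work honestly in $\widehat{\mathbb{C}}$ rather than in $\mathbb{C}$ and verify that uniform convergence on compact subsets of $W$ delivers a well-defined limit valued in the Riemann sphere.
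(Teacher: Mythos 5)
Your overall route is the paper's: split the plane into $J_c$, the bounded Fatou components, and the rest, show the first two classes consist of points with bounded orbit, and observe the rest lies in an unbounded Fatou component. Your added detail (Böttcher--Fatou producing the basin at $\infty$, uniqueness of the unbounded component via normality and the identity theorem, complete invariance of $J_c$) goes beyond what the paper writes and is welcome. However, there is one step that does not hold up as written, and it is exactly the step the paper itself asserts without justification: the claim that a point $z$ in a bounded Fatou component $W'$ has bounded orbit because it ``stays in the bounded set $\bigcup_n f_c^n(W')$.'' Nothing you have proved shows that this union is bounded: each $f_c^n(W')$ lies in a single Fatou component, and even granting that every component other than $F_\infty$ is bounded, an infinite union of bounded sets need not be bounded; worse, you have not ruled out that $f_c^n(W')\subseteq F_\infty$ for some $n$, in which case the orbit escapes and the claim is simply false. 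Your uniqueness statement for the unbounded Fatou component does not close this, because the basin of infinity could a priori have bounded preimage components (for general rational maps, basins do have infinitely many components).

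The standard repair is the maximum principle, and it is worth making explicit since it is the only genuinely nontrivial content of the corollary. If $W'$ is a bounded component of the open set $F_c$, then $\partial W'\subseteq J_c$ (a boundary point of a component of an open set cannot lie in that open set). Since $J_c$ is compact and forward invariant, there is $R$ with $|f_c^n(w)|\leq R$ for all $w\in J_c$ and all $n$. Applying the maximum modulus principle to the polynomial $f_c^n$ on the bounded domain $W'$ gives $|f_c^n(z)|\leq \max_{\partial W'}|f_c^n|\leq R$ for every $z\in W'$ and every $n$, so the orbit is bounded and $W'\subseteq \widetilde{K}_c$. (The same computation shows no iterate of $W'$ can enter $F_\infty$.) With this inserted, your argument is complete and in fact tighter than the paper's, which states the bounded-component step as a bare assertion.
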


\begin{proof} 
Because $J_c$ is bounded and $f$-invariant, every orbit
starting in $J_c$ is bounded and belongs by definition to the
filled-in Julia set. If a point is in a bounded component of $F_c$,
its forward orbit stays bounded and it belongs to the filled in Julia set.
On the other hand, if a point is not in the Julia set or a bounded component
of $F_c$, then it belongs to an unbounded component of the Fatou set $F_c$.
\end{proof} 

{\bf Definition:} A continuous function $G: \mathbb{C} \mapsto \mathbb{R}$ 
is called the potential theoretical {\bf Green function}
of a compact set $K \subset \mathbb{C}$, if $G$ is {\bf harmonic} outside $K$, 
vanishes on $K$ and has the property that
$G(z)-\log(z)$ is bounded near $z=\infty$. \\

\begin{lemma}
The Green function $G_c$ exists for the filled-in Julia set
$K_c$ of the polynomial $f_c$. The map $(z,c) \mapsto G_c(z)$ 
is continuous.
\end{lemma}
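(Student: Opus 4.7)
The plan is to define $G_c$ on all of $\mathbb{C}$ by the escape-rate limit
$$G_c(z) := \lim_{n \to \infty} d^{-n} \log^+ |f_c^n(z)|,$$
where $d \geq 2$ is the degree of $f_c$ and $\log^+ t := \max(\log t, 0)$, and then to verify that this satisfies the defining properties of the Green function of $K_c$. Joint continuity of $(z,c) \mapsto G_c(z)$ will come for free from locally uniform convergence of the defining sequence, since each term $d^{-n} \log^+|f_c^n(z)|$ is a continuous function of $(z,c)$ (the polynomial $f_c^n$ is continuous in both variables).

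For the uniform convergence I would first establish the pointwise bound
$$\bigl|\log^+|f_c(w)| - d\log^+|w|\bigr| \leq C(|c|),$$
with $C$ continuous in $|c|$. This is proved by splitting at $|w|=1$: for $|w|>1$ one writes $f_c(w) = w^d(1+c/w^d)$ and takes logarithms; for $|w|\leq 1$ both sides are directly bounded by $1+|c|$. Substituting $w=f_c^n(z)$ and dividing by $d^{n+1}$ yields
$$\Bigl|d^{-(n+1)}\log^+|f_c^{n+1}(z)| - d^{-n}\log^+|f_c^n(z)|\Bigr| \leq d^{-(n+1)}C(|c|),$$
so the sequence is uniformly Cauchy on every compact subset of $\mathbb{C}\times\mathbb{C}$; hence the limit $G_c(z)$ exists and is jointly continuous.

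To identify this limit with the Green function, three properties must be checked. For $z\in K_c$ the orbit is bounded, so $d^{-n}\log^+|f_c^n(z)|\to 0$ and $G_c\equiv 0$ on $K_c$. Near $\infty$, the B\"ottcher--Fatou Lemma (applied after the change of variable $w=1/z$, which turns $\infty$ into a superattracting fixed point of order $d$ for the conjugated map) furnishes a holomorphic conjugacy $\phi_c$ on $\{|z|>R\}$ with $\phi_c\circ f_c=\phi_c^d$ and $\phi_c(z)/z\to 1$. Both $G_c$ and $\log|\phi_c|$ satisfy the scaling identity $u\circ f_c = d\cdot u$ and share the asymptotic $\log|z|+o(1)$ at infinity, so $G_c=\log|\phi_c|$ on this neighborhood; this gives the required normalization $G_c(z)-\log|z|\to 0$ and, via the pullback formula $G_c(z)=d^{-n}\log|\phi_c(f_c^n(z))|$ for $n$ large enough that $f_c^n(z)$ lies in the B\"ottcher domain, harmonicity of $G_c$ on all of $\mathbb{C}\setminus K_c$. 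The joint analyticity of $\phi_c$ in $(c,z)$ stated in the corollary above is then transferred to $G_c$ through the same pullback.

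The main obstacle is continuity across the Julia set $\partial K_c = J_c$: one must verify that $G_c(z)\to 0$ as $z$ approaches $J_c$, uniformly for $c$ ranging in a compact set. The uniform Cauchy bound in the second paragraph delivers exactly this, but only because the defining sequence uses $\log^+$ rather than $\log$; the truncation prevents degeneration in the regime where $|f_c^n(z)|$ oscillates around or below $1$ for many iterations before finally escaping — a regime that is unavoidable for $z$ close to $J_c$ and for $c$ near $\partial M$. With the truncation in place, joint continuity of $G_c$ on $\mathbb{C}\times\mathbb{C}$ follows at no extra cost, completing the proof.
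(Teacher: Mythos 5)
Your proof is correct in substance but takes a genuinely different route from the one in the paper. The paper constructs $G_c$ first only near $\infty$ as $\log|\phi_c|$ using the B\"ottcher coordinate, then extends it to all of $\mathbb{C}$ by the functional equation $G_c(z)=G_c(f_c(z))/2$ together with $G_c\equiv 0$ on $K_c$, and finally argues joint continuity by showing that the sublevel sets $G_c^{-1}([0,\epsilon))$ are open, via closedness of the sets $A_n=\{(c,z)\,:\,G_c(f_c^n(z))\geq 2^n\epsilon\}$. You instead define $G_c$ globally from the start as the escape-rate limit $\lim_n d^{-n}\log^+|f_c^n(z)|$ and get joint continuity for free from the telescoping estimate, which makes the sequence uniformly Cauchy on compacta of $(c,z)$; the B\"ottcher coordinate is only used afterwards to identify the limit with $\log|\phi_c|$ near $\infty$ and to read off harmonicity and the normalization at $\infty$. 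Your route buys a much more transparent proof of the delicate point --- continuity of $(c,z)\mapsto G_c(z)$ across $J_c$ and across $\partial M$ --- which is exactly where the paper's argument is hardest to follow; the paper's route has the advantage of making the relation $G_c=\log|\phi_c|$ and the dynamical functional equation the primary definition, which is what the Douady--Hubbard argument subsequently uses.

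One small repair is needed in your key estimate. The case split at $|w|=1$ does not quite justify
$$\bigl|\log^+|f_c(w)|-d\log^+|w|\bigr|\leq C(|c|):$$
for $|w|>1$ with $w^d$ close to $-c$, the factor $\log|1+c/w^d|$ is unbounded below (indeed $f_c(w)$ can vanish), so ``taking logarithms'' of $w^d(1+c/w^d)$ does not control the left-hand side in that regime. The bound is nevertheless true: split instead at $|w|^d\geq 2(1+|c|)$, where $|1+c/w^d|\in[\tfrac12,\tfrac32]$ and both $\log^+$'s are genuine logarithms, and note that in the complementary region both $\log^+|f_c(w)|$ and $d\log^+|w|$ are separately bounded by $\log 2+\log(1+|c|)$ plus a constant, using $|f_c(w)|\leq |w|^d+|c|$. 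With that correction the telescoping argument and the rest of your proof go through.
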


\begin{proof} 
The Boettcher-Fatou lemma assures the existence of the 
function $\phi_c$ conjugating $f_c$ with $z \mapsto z^2$ in a 
neighborhood $U_c$ of $\infty$. Define for $z \in U_c$
$$ G_c(z)=\log|\phi_c(z)|   \; . $$
This function is harmonic in $U_c$ and grows like $\log|z|$ because
by Boettcher it satisfies
$|f_c^n(z)| \geq C |z|^{2^{n}}$ for some constant $C$ and so
$$ G_c(z)= \lim_{n \to \infty} 
                 \frac{1}{2^n} \log|f_c^n(z)| \; .  $$
Although $G_c$ is only defined in $U_c$,
there is one and only one extension to all of $C$ which is continuous
and satisfies
\begin{equation}
\label{c}
    G_c(z) = G_c(f_c(z))/2 \; .
\end{equation}
In fact, we define $G_c(z)=0$ for $z \in K_c$, and
$G_c(z)=G(f_c^n(z))/2^n$ otherwise, where $n$ is large enough so
that $f_c^n(z) \in U$.
We know from this extension that $G_c$ is a smooth real analytic function
outside $K_c$. From the {\bf maximum principle}, we know that $G_c(z)>0$ for
$z \in C \setminus K_c$. We have still to show that $G_c$ is continuous
in order to see that it is the Green function. The continuity follows from the
stronger statement: \\

$(z,c) \mapsto G_c(z)$ is jointly continuous. \\
$G_c^{-1}( [0,\epsilon) )$ is open in $C^2$
for all $\epsilon>0$ if and only if there exists $n$ such that
$$  A_n:= \{ (c,z) \; | \; G_c(f_c^n(z)) \geq  2^n \epsilon \}  $$
is closed $\forall \epsilon>0$.
Given $r>0$. There exists a ball of radius $b$ which contains all the sets
$K_c$ for $|c|\leq r$.
For $R \geq G_r(b)$, all the solutions $\xi$ of $G_c(\xi) \geq R$
satisfy $|\xi| \geq b$ if $|c| \leq r$.
The set $B=\{(c,\xi) \; | \; G_c(\xi \geq R\} \cap \{|c| \leq r\}$
is closed. For $n$ large enough, also $A_n \cap \{|c| \leq r\}$ is closed
and $A_n$ is closed.
\end{proof}

\begin{thm}[Douady-Hubbard]
The Mandelbrot set M is connected.
\end{thm}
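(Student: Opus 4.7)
The plan is to build an explicit conformal isomorphism $\Phi: \mathbb{C}\setminus M \to \mathbb{C}\setminus\overline{\mathbb{D}}$ and then deduce connectivity of $M$ from the fact that its complement in the Riemann sphere is a topological disk. The key ingredient is already in place: the Böttcher-Fatou lemma supplies, near infinity, a coordinate $\phi_c$ conjugating $f_c(z)=z^2+c$ to $w\mapsto w^2$, and the corollary upgrades this to joint analyticity of $(c,z)\mapsto \phi_c(z)$ on its domain.

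First I would define $\Phi(c):=\phi_c(c)$. The point is that $c\notin M$ means the forward orbit of $0$ under $f_c$ is unbounded; since $f_c(0)=c$, the orbit of $c$ itself escapes, so for sufficiently large $n$ the point $f_c^n(c)$ lies in the neighborhood of $\infty$ where $\phi_c$ is initially defined. The functional equation $\phi_c\circ f_c=\phi_c^{\,2}$ then lets me pull $\phi_c$ back along the orbit by setting
$$\Phi(c) \;=\; \bigl(\phi_c(f_c^n(c))\bigr)^{1/2^n},$$
the branch of the $2^n$-th root being fixed by continuous choice from infinity, where $\phi_c(z)=z+O(1)$. Joint continuity/analyticity of the limit in $c$ is inherited from the joint analyticity in the Böttcher corollary together with the joint continuity of $G_c(z)$ proved in the preceding lemma. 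Moreover $|\Phi(c)|=\exp(G_c(c))>1$ for $c\notin M$, so $\Phi$ maps $\mathbb{C}\setminus M$ into $\mathbb{C}\setminus\overline{\mathbb{D}}$.

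Next I would analyze the behavior at infinity. From $\phi_c(z)=z+O(1)$ as $z\to\infty$ and $f_c^n(c)=c^{\,2^n}\bigl(1+o(1)\bigr)$ for $|c|$ large, a direct expansion yields $\Phi(c)=c+O(1/|c|)$. Hence $\Phi$ extends to a holomorphic map $\widehat{\mathbb{C}}\setminus M\to\widehat{\mathbb{C}}\setminus\overline{\mathbb{D}}$ sending $\infty\mapsto\infty$ with derivative $1$ there. Properness is the next step: if $c_k\to c^{*}\in\partial M$, then by joint continuity of $G_c$ we have $G_{c_k}(c_k)\to G_{c^{*}}(c^{*})=0$, so $|\Phi(c_k)|\to 1$. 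A proper holomorphic map between open subsets of $\widehat{\mathbb{C}}$ has a well-defined degree; the expansion at infinity shows this degree is $1$, so $\Phi$ is a biholomorphism. (Alternatively, having identified one simply connected end, uniformization plus the normalization at infinity pins $\Phi$ down uniquely.) Consequently $\widehat{\mathbb{C}}\setminus M$ is conformally a disk, hence simply connected, and therefore its complement $M$ in $\widehat{\mathbb{C}}$ is connected.

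The main obstacle, as I see it, is justifying that $\Phi$ is globally well defined on $\mathbb{C}\setminus M$: the branch choices in the successive square roots must be consistent as $c$ varies over the whole escape locus, and the continuation of $\phi_c$ from infinity down to $c$ must not run into a critical value of some $f_c^n$. This is exactly where the hypothesis $c\notin M$ is used: the critical point $0$ escapes, so along the whole parameter set $\mathbb{C}\setminus M$ the orbit $\{f_c^n(c)\}$ stays outside the filled Julia set $K_c$, and the level sets $\{G_c=\text{const}>0\}$ foliate that region by analytic Jordan curves transported by $\phi_c$ to circles. Once this monodromy obstruction is cleared, the properness and the degree-one asymptotics make the conformal isomorphism, and hence the connectivity of $M$, essentially automatic.
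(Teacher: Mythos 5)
Your proposal is correct and follows essentially the same route as the paper's proof: you define $\Phi(c)=\phi_c(c)$ via the B\"ottcher coordinate continued down to the level $G_c(0)$ (using that $G_c(c)=2G_c(0)>G_c(0)$ so the critical value never obstructs the continuation when $c\notin M$), establish properness from the joint continuity of the Green function, and read off bijectivity from the behavior at infinity. Your use of the degree of a proper holomorphic map is just a repackaging of the paper's argument-principle step for injectivity together with its open-plus-closed-image argument for surjectivity, so the two proofs coincide in substance.
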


\begin{proof}
The B\"ottcher function $\phi_c(z)$ can be extended to
$$ S_c:=\{ z \; | \; G_c(z) > G_c(0) \}  \; .  $$
Continue defining
$\phi_c(z):=\sqrt{\phi_c(z^2+c)}$ to get $\phi_c$ having defined in
larger and larger regions. This can be done as long as the
region $\phi_c^{-1}(\{r\})$ is connected (this assures that the derivative
of $\phi_c$ is not vanishing). Because Equation~(1)
gives $G_c(c)=2G_c(0)>G_c(0)$, every $c$ is contained in the set
$S_c$ and the map
$$ \Phi: c \mapsto G_c(c) $$
is well defined. It is analytic outside $M$ and can be written as
$$ \Phi(z)= \lim_{n \to \infty} [f_c^n(c)]^{1/2^n} \; . $$

{\bf Claim:}
$$ \Phi: \overline{C} \setminus M \to \overline{C} \setminus \overline{D} $$
is an analytic diffeomorphism, where $\overline{\mathbb{C}}=\mathbb{C} \cup \{\infty\}$ is the
Riemann sphere. (This implies that the complement of $M$
is simply connected in $\overline{\mathbb{C}}$, which is equivalent to the fact that
$M$ is connected). The picture shows some level curves of the 
function $\phi_k(c)$. The function $(\phi_6(z)$ is
already close to the map $\Phi(z)$ in the sense that the level sets give a hint
about the shape of the Mandelbrot set.  \\

(1) $\Phi$ is analytic outside $M$. This follows from the Corollary.  \\

(2) For $c_n \to M$, we have $|\Phi(c_n)| \to 1$.
Proof. This follows from the continuity of the Green function. \\\

(3) The map $\Phi$ is {\bf proper} \index{proper}, meaning that the inverse of any
compact set is compact. \\ Given a compact set $K \subset C \setminus D$. The two compact sets
$D$ and $K$ have positive distance. Assume $\phi^{-1}(K)$ is
not compact. Then, there exists a sequence $c_n \in \Phi^{-1}(K)$ with
$c_n \to c_0 \in M$ so that $|\Phi(c_n)| \to 1$. This
is not possible because $\Phi(c_n) \in K$ is bounded away from $D$. \\

(4) The map $\Phi$ is open (meaning that it maps open sets into open sets). This
follows from the fact that $\Phi$ is analytic. (This fact is called
{\bf open mapping theorem} (see \cite{Conway1978} p. 95)) \\

(5) The map $\Phi$ maps closed sets into closed sets. \\
A proper, continuous map $\Phi: X \to Y$ between two
locally compact metric spaces $X,Y$ has this property.
Proof. Given a closed set $A \subset X$. Take a sequence $\Phi(a_n)$
in $\Phi(A)$ which converges to $b \in Y$. Take a compact neighborhood
$K$ of $b$ (use local compactness of $Y$).
Then $\Phi^{-1}(K \cap \overline{\phi(A)})$ is compact and contains
almost all $a_n$. The sequence $a_n$ contains therefore an accumulation
point $a \in X$. The continuity implies $\Phi(a_n) \to \Phi(a)=b$
for a subsequence so that $b \in \Phi(K)$. Consequently $\Phi(K)$ is closed. \\

(6) $\Phi$ is surjective. \\
The image of $\Phi(\overline{C} \setminus M)$ is an open subset of set
$\overline{C} \setminus \overline{D}$ because $\Phi$ is open. The image of
the boundary of $M$ is (use (5))
a closed subset of $\overline{C} \setminus D$
which coincides with the boundary of $D$ because the boxed statement about
the Green function showed
$G_c(c) \to 0$ as $c \to M$. \\

(7) $\Phi$ is injective. \\
Because the map $\Phi$ is proper, the inverse image $\phi^{-1}(s)$ of a point $s$
is finite. There exists therefore a curve $\Gamma$ enclosing all
points of $\Phi^{-1}(s)$. Let $\sharp A$ denote the number of elements in $A$.
By the {\bf argument principle} (see \cite{AhlforsComplexAnalysis} p. 152),
we have
$$ \sharp (\phi^{-1}(s)) = \frac{1}{2 \pi i} 
  \int_{\Gamma} \frac{\Phi'(z)}{\Phi(z)-s} \; dz  \;  $$
and this number is locally constant. Given $M>0$,
we can find a curve $\Gamma$ which
works simultaneously for all $|s| \leq M$.
Because $\Phi$ is surjective and
$\sharp (\phi^{-1}(\infty))=1$, we get that $\sharp (\phi^{-1}(s))=1$ for all
$z \in C \setminus \overline{D}$ and $\phi$ is injective. \\

(8) The map $\Phi^{-1}$ exists on $C \setminus D$ and is analytic. \\
Because an injective, differentiable and open map has a differentiable 
inverse, (this is called {\bf Goursat's theorem}), the inverse is analytic.
see \cite{Conway1978} p. 96). 
\end{proof}

\paragraph{}
We end with some history of the classical Mandelbrot set.
In 1879, {\bf Arthur Cayley} posed the problem to study the 
regions $F_p$ in the plane, where the {\bf Newton iteration} of a polynomial 
like $g(z) = z^3-1$ converges to some root $p$ of $f$. 
The Newton iteration of a function $g$ is $T(z)=z-g(z)/g'(z)$ is by nature not
a polynomial map but a {\bf rational map}. The study of rational maps has become
a subject of interest by its own \cite{Beardon}. 

\paragraph{}
{\bf Gaston Julia} (1893-1978) and {\bf Pierre Fatou}
(1879-1929) worked on the iteration of
analytic maps. Julia and Fatou sets are called after them. Julia and Fatou 
both competed for the 1918 'grand prix' of the French Academie of Sciences and 
produced similar results. This produced a priority dispute. Julia lost his 
nose during WWI and had to wear a leather strap across his face.
He continued with his research while recovering in the hospital. 

\paragraph{}
{\bf Robert Brooks and Peter Matelski} produced in 1978 
the first picture of the Mandelbrot set in the context of 
Kleinian groups \cite{MilnorNotes}. Their paper had the title
``The dynamics of $2$-generator subgroups of ${\rm PSL}(2,\,C)$". The
defined $\tilde{M} = \{ c \: | \; f_c$ has a stable periodic orbit $\; \}$. 
This set is now called {\bf Brooks-Matelski set} and is believed
to be the interior of the Mandelbrot set $M$. If the later were locally 
connected, this would be true: ${\rm int}(M) = \tilde{M}$. 

\paragraph{}
{\bf John Hubbard} made better pictures of a quite different
parameter space arising from Newton's method for cubics. 
Hubbard was inspired by a question from a calculus student. 
{\bf Benoit Mandelbrot}, perhaps inspired by Hubbard, made corresponding 
pictures in 1980 for quadratic polynomials. He conjectured the set $M$ 
is disconnected because his computer pictures showed ``dust" with no connections
to the main body of $M$. It is amusing that the journal's editorial staff 
removed that dust, assuming it was a problem of the printer.

\paragraph{}
{\bf John Milnor} writes in his book of 1991 \cite{MilnorNotes}:
{\it "Although Mandelbrot's statements in this first paper
were not completely right, he deserves a great
deal of credit for being the first to point out the extremely complicated
geometry associated with the parameter space for quadratic maps.
His major achievement has been to demonstrate to a very wide audience 
that such complicated fractal objects play an important role in a 
number of mathematical sciences."}

\paragraph{}
{\bf Adrien Douady and John Hubbard} prove the connectivity of $M$ in 1982 (see \cite{DouadyHubbard}).
This was a mathematical breakthrough. In that paper, the name {\bf ``Mandelbrot set"}
was introduced. The paper provided a firm foundation for its mathematical study. 
We followed on this handout their proof. Note that the Mandelbrot set is also 
{\bf simply connected}, but this is easier to show. Both statements use that a subset of
the plane is connected if and only if the complement is simply connected. 

\paragraph{}
One of the first things which comes to mind, when talking
about fractals, is the Mandelbrot set. But it is not a "fractal": in 1998,
{\bf Mitsuhiro Shishikura} has shown that its Hausdorff dimension of 
$M$ is $2$. \cite{Shishikura1991}. 
Also for higher dimensional polynomials, one can define Julia and 
Mandelbrot sets. For cubic polynomials $f_{a,b}(z) = z^3-3a^2z+b$, 
define the {\bf cubic locus set}
$\{ (a,b) \in C^2 \; | \;  K_{a,b}$ is connected  $\; \}$,
where $K_{a,b}$ is the {\bf prisoner set} 
$K_{a,b} = \{ z \; | \; f_{a,b}^n(z)$ stays bounded. $\}$.  
{\bf Bodil Branner} showed around 1985, that the cubic locus set is connected.
See \cite{BrannerHubbard1988}.
This generalizes the Douady-Hubbard theorem. 

\paragraph{}
A major open problem about polynomial complex dynamical systems is whether the 
Mandelbrot set is locally connected or not. A subset $M$ of the plane is
called {\bf locally connected}, if at every point $x \in M$ if every 
neighborhood of $x$ contains a neighborhood, in which $M$ is connected. 
A locally connected set does not need to be connected (two disjoint disks 
in the plane are locally connected but not connected). A connected set 
does not need to be locally connected. An example is the union of the 
graph of $\sin(1/x)$ and the $y$-axes. 

\begin{figure}[!htpb]

\scalebox{0.28}{\includegraphics{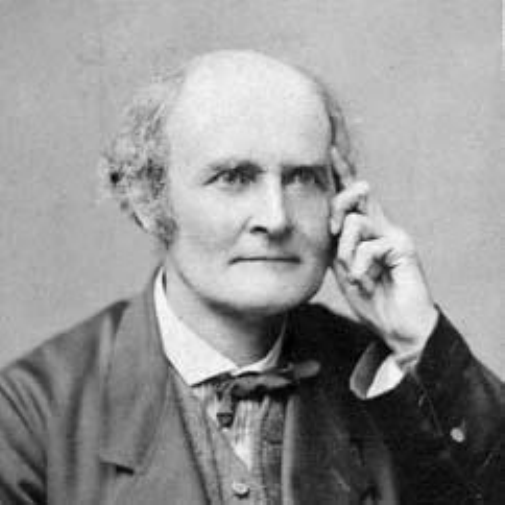}}
\scalebox{0.30}{\includegraphics{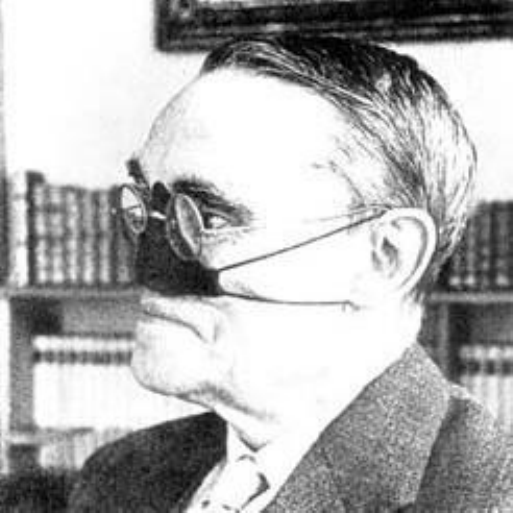}}
\scalebox{0.30}{\includegraphics{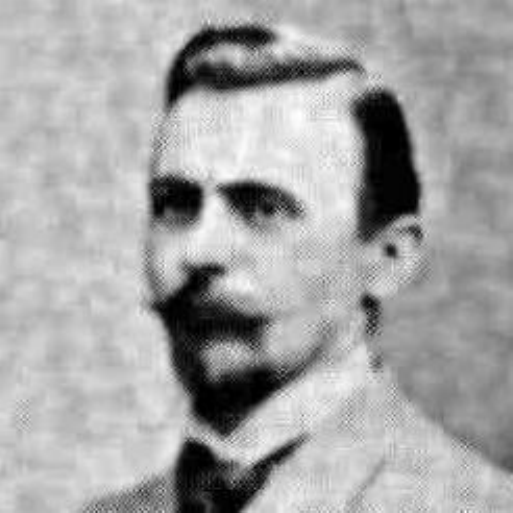}}
\scalebox{0.08}{\includegraphics{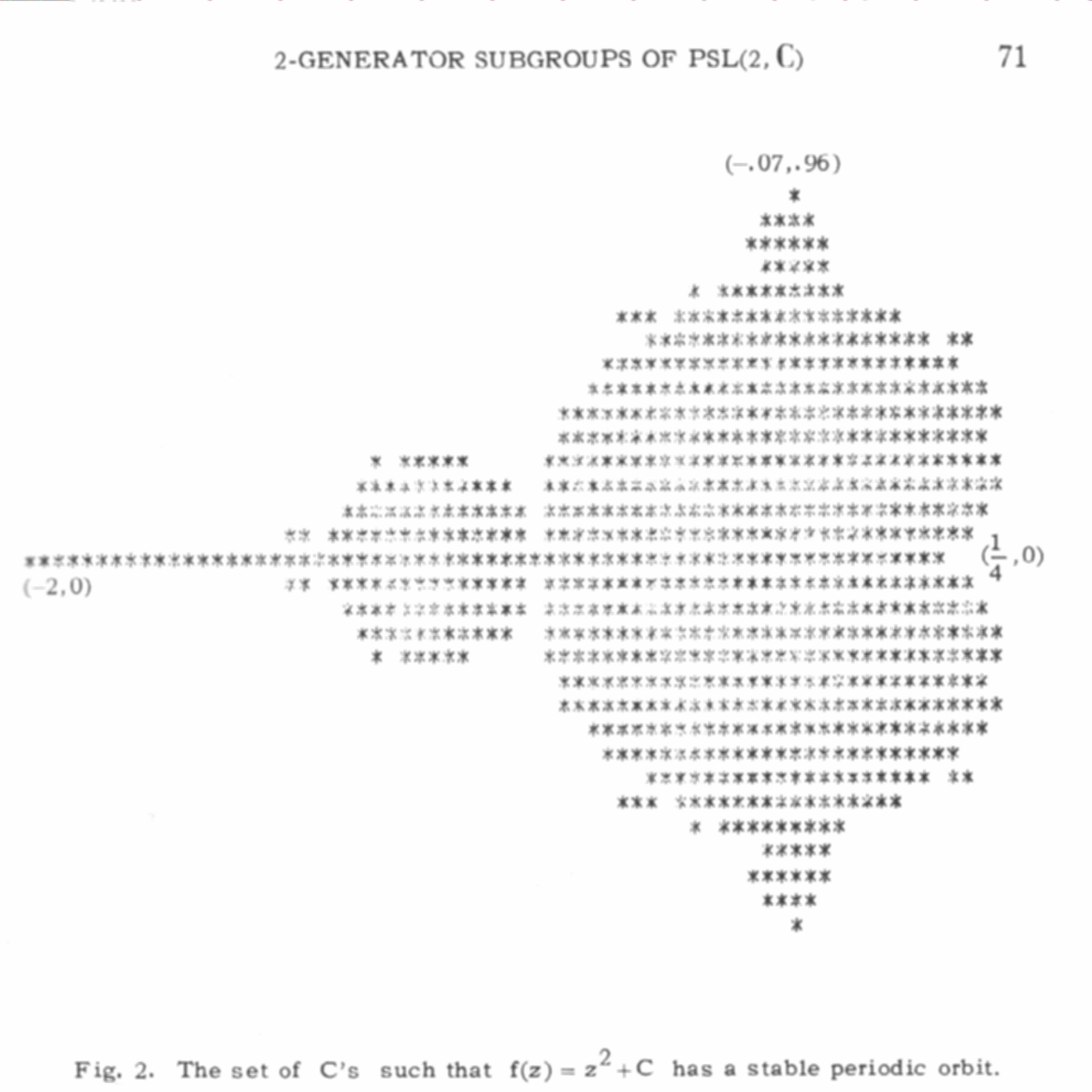}}
\scalebox{0.40}{\includegraphics{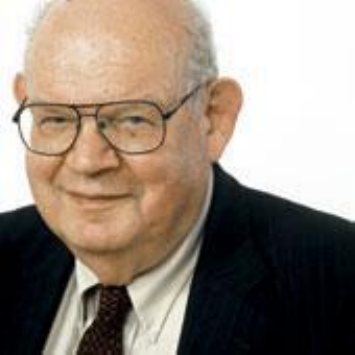}}
\\
\scalebox{0.18}{\includegraphics{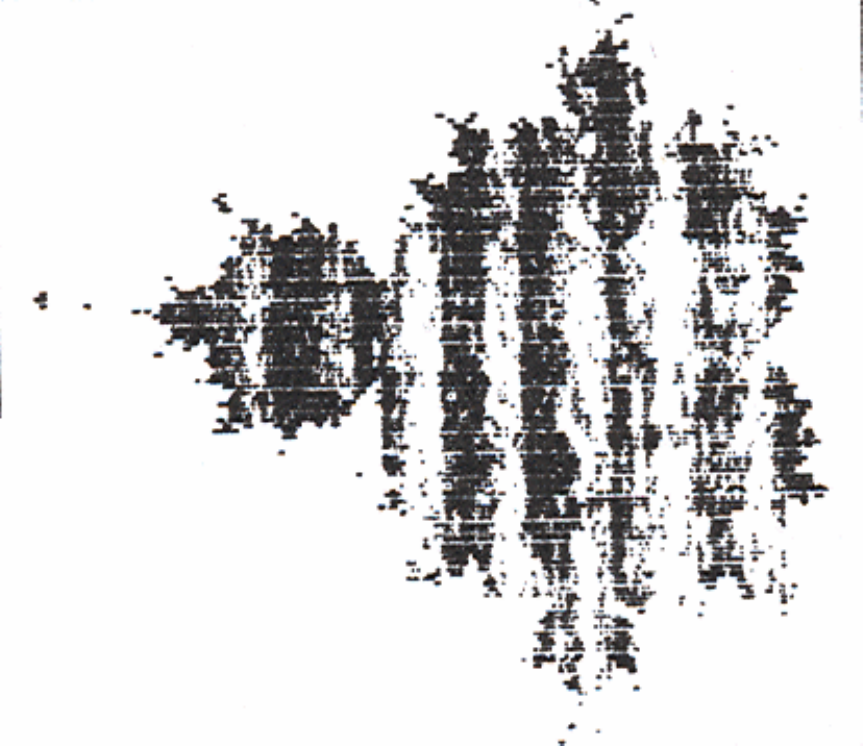}}
\scalebox{0.40}{\includegraphics{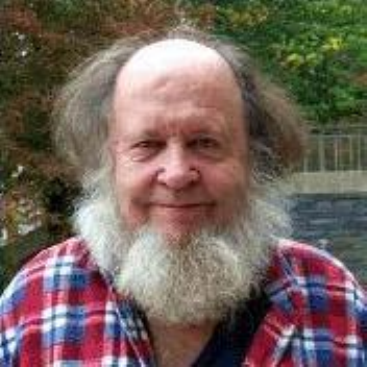}}
\scalebox{0.40}{\includegraphics{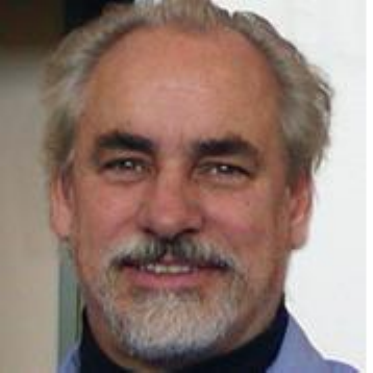}}
\scalebox{0.24}{\includegraphics{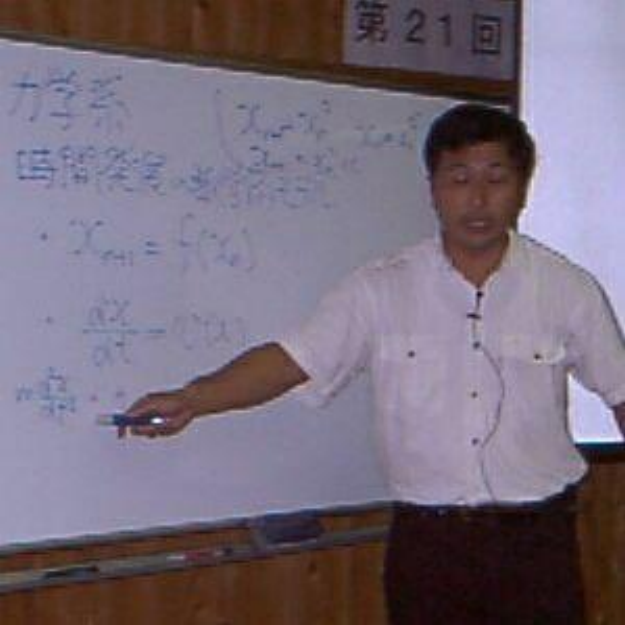}}
\scalebox{0.40}{\includegraphics{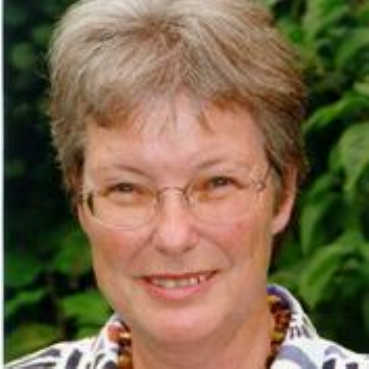}}
\caption{
Cayley, Julia, Fatou, Books,Matelski, Mandelbrot, Douady,
Hubbard, Shishikura, Branner.
}
\end{figure}

\bibliographystyle{plain}

\end{document}